\documentclass[12pt]{amsart}
\usepackage{amssymb}
\usepackage{amsfonts}
\usepackage{latexsym}
\usepackage{amscd}
\usepackage[mathscr]{euscript}
\usepackage{xy} \xyoption{all}

\vfuzz2pt 
\hfuzz2pt 


\addtolength{\textwidth}{4cm} \addtolength{\oddsidemargin}{-2cm}
\addtolength{\evensidemargin}{-2cm} \textheight=22.15truecm

\newcommand{\N}{{\mathbb{N}}}

\newcommand{\uloopr}[1]{\ar@'{@+{[0,0]+(-4,5)}@+{[0,0]+(0,10)}@+{[0,0] +(4,5)}}^{#1}}
\newcommand{\uloopd}[1]{\ar@'{@+{[0,0]+(5,4)}@+{[0,0]+(10,0)}@+{[0,0]+ (5,-4)}}^{#1}}
\newcommand{\dloopr}[1]{\ar@'{@+{[0,0]+(-4,-5)}@+{[0,0]+(0,-10)}@+{[0, 0]+(4,-5)}}_{#1}}
\newcommand{\dloopd}[1]{\ar@'{@+{[0,0]+(-5,4)}@+{[0,0]+(-10,0)}@+{[0,0 ]+(-5,-4)}}_{#1}}

\newcommand{\luloop}[1]{\ar@'{@+{[0,0]+(-8,2)}@+{[0,0]+(-10,10)}@+{[0, 0]+(2,2)}}^{#1}}

\newtheorem{lemma}{Lemma}
\newtheorem{corollary}[lemma]{Corollary}
\newtheorem{theorem}[lemma]{Theorem}
\newtheorem{proposition}[lemma]{Proposition}
\newtheorem{remark}[lemma]{Remark}
\newtheorem{definition}[lemma]{Definition}

\newtheorem*{nota}{Notation}

\def\Bez{B\'{e}zout}






\def\N{{\mathbb N}}

\def\dualita#1#2{\mathrel{
                 \mathop{\vcenter{
                 \offinterlineskip
                 \hbox to 0.6truecm{\rightarrowfill}
                 \hbox to 0.6truecm{\leftarrowfill}}}%
                 \limits_{#2}^{#1}}}

\begin{document}

\title[Leavitt path algebras are B\'{E}ZOUT]{Leavitt path algebras are B\'{e}zout}
\author{Gene Abrams$^*$}
\address{Department of Mathematics, University of Colorado,
Colorado Springs, CO 80918 U.S.A.}
\email{abrams@math.uccs.edu}
\thanks{2010 AMS Subject Classification:  16S99 (primary) \\  
.  \ \  $^*$corresponding author \ \ abrams@math.uccs.edu  \\  The first  author is partially supported by a Simons Foundation Collaboration Grants for Mathematicians Award \#208941.     The second and third authors are supported by Progetto di Eccellenza Fondazione Cariparo ``Algebraic structures
and their applications: Abelian and derived categories, algebraic entropy and representation of algebras''.
}

\author{Francesca Mantese}
\address{Dipartimento di Informatica, Universit\`{a} degli Studi di Verona, I-37134 Verona, Italy}
\email{francesca.mantese@univr.it}

\author{Alberto Tonolo}
\address{Dipartimento Matematica, Universit\`{a} degli Studi di Padova, I-35121, Padova, Italy}
\email{tonolo@math.unipd.it}


\dedicatory{Dedicated to the memory of Frank W. Anderson: teacher, scholar, friend}

\keywords{Leavitt path algebra, \Bez \ ring}

\begin{abstract}
Let $E$ be a  directed graph, $K$ any field, and let $L_K(E)$ denote  the Leavitt path algebra of $E$ with coefficients in $K$.     We show that $L_K(E)$ is a \Bez \ ring, i.e.,  that every finitely generated one-sided ideal of $L_K(E)$ is principal.  
\end{abstract}

\maketitle

Given any directed graph $E$ and field $K$, one may construct the {\it Leavitt path algebra of E with coefficients in K} (denoted $L_K(E)$), as first described in \cite{AAP1} and \cite{AMP}.  Since the introduction of the topic in 2005, Leavitt path algebras have been the object  of broad and deep investigation,   not only among algebraists, but in the analysis and symbolic dynamics communities as well.   The synergy within  this at-first-seemingly-disparate group of researchers has led to a number of important advances in each of these disciplines.   For a general overview of the topic, including a description of these wide-ranging connections, see e.g. \cite{Survey}.

Over the past decade, various structural properties of the algebras $L_K(E)$ have been discovered, with many of the results in the subject taking on the following form:  $L_K(E)$ has some specified algebraic property if and only if $E$ has some specified graph-theoretic property.  (The structure of the field $K$ often plays no role in results of this type.)  A few (of many) examples of such results include a description of those Leavitt path algebras which are simple; purely infinite simple; finite dimensional; prime; primitive;  etc.        A significant majority of these results can be interpreted as statements about the two-sided ideal structure of $L_K(E)$.   Indeed,   it has only been quite recently  that the subject of the one-sided structure (and, more generally,  the module-theoretic structure) of these algebras has been broached (see e.g. \cite{AR}).    Herein we continue this  line of investigation by establishing a perhaps-surprising result about the structure of the finitely generated one-sided ideals of a Leavitt path algebra, as follows.

An ideal $I$ of a ring $R$ is called {\it principal} if it is generated by a single element, i.e., if $I = Rt$ for some $t\in I$.   The well-studied notion of a {\it principal ideal domain} (a commutative integral domain in which every ideal is principal) appears throughout the mathematical literature; the standard nontrivial example of such is the algebra of polynomials $K[x]$ over a field $K$.    More generally, a {\it  principal ideal ring} is a (not necessarily commutative, not necessarily zero-divisor-free) ring in which every one-sided ideal is principal.  These too have been well-studied, with an overview appearing in  \cite[Chapter 3]{J}, and foundational results appearing in,  e.g.,  \cite{Amitsur},   \cite{Asano},  and \cite{Goldie}.  

 In particular, a principal ideal ring necessarily contains no non-finitely-generated one-sided ideals.    So the notion of a principal ideal ring can be extended to include only those one-sided ideals which admit the possibility  of being principal:

\medskip

{\bf Definition.}  A ring $R$ is called \emph{left B\'{e}zout} in case every finitely generated left ideal of $R$ is principal.  

\medskip

$R$ is called \emph{\Bez}  in case every finitely generated one-sided (left and right) ideal of $R$ is principal.  
 As with their more classical  forebears, \Bez \ rings have been investigated in a number of settings.       For instance,    Kaplansky considered \Bez \ rings in \cite[\S 4]{Kap},  as part of his investigation of triangular reduction of matrices.   Various additional properties of  \Bez \  rings  have been  considered as well in \cite{B}, \cite{Cohn},    \cite{Rob},  and  \cite{W}.

There are a number of ring-theoretic properties which $R = L_K(E)$ possesses for any  graph $E$.  These include:  $R$ is hereditary (every one-sided ideal is projective), $R$ is  semiprimitive (zero Jacobson radical), and $R$ is ring-isomorphic to its opposite ring $R^{op}$. 


The main result of this article is  Theorem \ref{LpasareBezout} (and its generalization, Corollary \ref{LpasBezoutallgraphs}), in which we add another property to this  list.  

\medskip

{\bf Theorem.}   For any  graph $E$ and field $K$, the Leavitt path algebra $L_K(E)$ is \Bez. 

\medskip

We note that a key role in establishing this theorem  will be played by a construction motivated directly by a result coming from  symbolic dynamics (Theorem \ref{outsplittheorem}).   

  Until the closing remarks of this article, we will focus solely on establishing this result for finite graphs.  That the result  holds for all graphs 
 will then follow as an easy consequence of a general result about Leavitt path algebras.

 
 \smallskip

 We set some notation.  A (directed) graph $E = (E^0, E^1, s,r)$ consists of a {\it vertex set} $E^0$, an {\it edge set} $E^1$, and {\it source} and {\it range} functions $s, r: E^1 \rightarrow E^0$.  For $v\in E^0$, the set of edges $\{ e\in E^1 \ | \ s(e)=v\}$ is denoted by $s^{-1}(v)$, and the set of edges $\{ e\in E^1 \ | \ r(e)=v\}$ by $r^{-1}(v)$. $E$ is called {\it finite} in case both $E^0$ and $E^1$ are finite sets.  A vertex $v$ is a {\it source vertex} (or simply a {\it source}) in case $r^{-1}(v) = \emptyset$, i.e., in case $v$ is not the range vertex of any edge in $E$.  
A {\it path} $\alpha$ in $E$ is a sequence $e_1 e_2 \cdots e_n$ of edges in $E$ for which $r(e_i) = s(e_{i+1})$ for all $1 \leq i \leq n-1$.  
We say that such $\alpha$ has {\it length} $n$, and we write $s(\alpha) = s(e_1)$ and $r(\alpha) = r(e_n)$.  We view each vertex $v \in E^0$ as a path of length $0$, and denote $v = s(v) = r(v)$.  
A  path $c = e_1 e_2 \cdots e_n$ in $E$   is a {\it cycle}  in case $r(e_n) = s(e_1)$, and there are no repeated vertices in the set $c^0 := \{s(e_1), s(e_2), \dots, s(e_n) \}$.     A subset $W$ of $E^0$ is called {\it hereditary} in case, whenever $w\in W$ and $v\in E^0$ and there is a path $\sigma$ for which $s(\sigma) = w$ and $r(\sigma) = v$, then $v\in W$.
 We say that $W$ is {\it saturated} if whenever $s^{-1}(v)\neq \emptyset$ and $\{r(e) \ |  s(e) = v\}\subseteq W$, then $v\in W$.

\smallskip


We give here a basic description of $L_K(E)$; for additional information, see 
\cite{TheBook}.       Let $K$ be a field, and let $E = (E^0, E^1, s,r)$ be a directed  graph with vertex set $E^0$ and edge set $E^1$.   The {\em Leavitt path $K$-algebra} $L_K(E)$ {\em of $E$ with coefficients in $K$} is  the $K$-algebra generated by a set $\{v\mid v\in E^0\}$, together with a set of symbols $\{e,e^*\mid e\in E^1\}$, which satisfy the following relations:

\smallskip

(V)   \ \ \  \ $vu = \delta_{v,u}v$ for all $v,u\in E^0$, \  

  (E1) \ \ \ $s(e)e=er(e)=e$ for all $e\in E^1$,

(E2) \ \ \ $r(e)e^*=e^*s(e)=e^*$ for all $e\in E^1$,

 (CK1) \ $e^*e'=\delta _{e,e'}r(e)$ for all $e,e'\in E^1$, and

(CK2)Ê\ \ $v=\sum _{\{ e\in E^1\mid s(e)=v \}}ee^*$ for every   $v\in E^0$ for which $0 < |s^{-1}(v)| < \infty$. 

\noindent 
An alternate description of $L_K(E)$ may be given as follows.  For any graph $E$ let $\widehat{E}$ denote the ``double graph" of $E$, gotten by adding to $E$ an edge $e^*$ in a reversed direction for each edge $e\in E^1$.   Then $L_K(E)$ is the usual path $K$-algebra $K\widehat{E}$, modulo the ideal generated by the relations (CK1) and (CK2).

\smallskip

It is easy to show that $L_K(E)$ is unital if and only if $|E^0|$ is finite; in this case, $1_{L_K(E)} = \sum_{v\in E^0}v$.    Every element of $L_K(E)$ may be written as $\sum_{i=1}^n k_i \alpha_i \beta_i^*$, where $k_i$ is a nonzero element of $K$, and each of the $\alpha_i$ and $\beta_i$ are paths in $E$ with $r(\alpha_i) = r(\beta_i)$.  If $\alpha \in {\rm Path}(E)$ then we may view $\alpha \in L_K(E)$, and will often refer to such $\alpha$ as a {\it real path} in $L_K(E)$; analogously, for $\beta = e_1 e_2 \cdots e_n \in {\rm Path}(E)$ we often refer to the element $\beta^* = e_n^* \cdots e_2^* e_1^*$ of $L_K(E)$ as a {\it ghost path} in $L_K(E)$.     The  map $KE \rightarrow L_K(E)$ given by the $K$-linear extension of $\alpha \mapsto \alpha$ (for $\alpha \in {\rm Path}(E)$) 
 is an injection of $K$-algebras by \cite[Corollary 1.5.12]{TheBook}.

 Since for a Leavitt path algebra we have $L_K(E) \cong L_K(E)^{op}$, the properties {\it left} \Bez \ and {\it right}  \Bez \ are equivalent in this context, so we will simply say that $L_K(E)$ is \Bez \ in either case.   We will use the language of left modules throughout.

\begin{remark}\label{Bezoutremark}
{\rm
In making conjectures about structural properties of Leavitt path algebras, one always first  tests those conjectures against the three primary colors of algebras which arise as such:  the $n \times n$ matrix rings  ${\rm M}_n(K)$; the Laurent polynomial ring $K[x,x^{-1}]$; and the classical Leavitt algebras  $L_K(1,n)$.   A matrix ring ${\rm M}_n(K)$ is well-known to be B\'{e}zout (in fact, to be a principal ideal ring);   this is true as well for the principal ideal domain $K[x,x^{-1}]$ and for the principal ideal ring ${\rm M}_n(K[x,x^{-1}])$ (for a proof that  ${\rm M}_n(K[x,x^{-1}])$ is a principal ideal ring see for instance \cite[Theorem 40]{J} ).   Intuitively, these are ``well-behaved" examples.   The somewhat exotic nature of  $L_K(1,n)$ provides the reason that it too is  \Bez.  Specifically, let $I$ be a finitely generated left ideal of  $R=L_K(1,n)$.  Since any Leavitt path algebra is hereditary, $I$ is finitely generated projective.  So $I$ is isomorphic to a direct summand of $R^m$ for some $m\geq 1$.   But $R$ has $R\cong R^m$ as left $R$-modules for any $m\geq 1$, so that $I$ is a direct summand of $R$, and thus principal.   Note that unlike the well-behaved cases ${\rm M}_n(K)$ and  $K[x,x^{-1}]$, $R = L_K(1,n)$ in fact has the stronger property that every finitely generated  left $R$-{\it module} is principal.  On the other hand, $L_K(1,n)$ is not a principal ideal ring; for instance, it is not hard to construct left ideals of $L_K(1,n)$ which are direct sums of infinitely many nonzero ideals.    (See Proposition \ref{pirLpas} below.)  

A fourth primary color of Leavitt path algebras is often added to the aforementioned three, namely, the Jacobson algebra $K\langle X,Y \  | \  XY = 1 \rangle$, which arises as the Leavitt path algebra $L_K(\mathcal{T})$  of the 
Toeplitz graph $\mathcal{T} = \ \  \ \xymatrix{ \bullet \ar@(ul,dl) \ar[r]
& \bullet  }$.     The Jacobson algebra was shown to be \Bez \   by Gerritzen \cite{G}. 

With these four classes of Leavitt path algebras having been shown to be \Bez,  the conjecture and subsequent establishment of Theorem \ref{LpasareBezout}  followed plausibly.  
\hfill $\Box$ 
}
\end{remark}






\begin{nota}

{\rm A cycle $c$ in a directed graph $E$ is called a {\it source cycle} in case $|r^{-1}(v)| = 1$ for every $v\in c^0$.
 In other words, $c$ is a source cycle if the vertices in $c^0$ receive no edges other than those which are already in the cycle $c$.  

 A subset $V$ of $E$ is called a \emph{source} if either $V=\{v\}$ where $v$ is a source vertex in $E$, or $V$ is the set of the vertices $c^0$ of a source cycle $c$ in $E$.   In case $E$ is finite, we denote by 
\begin{itemize}
\item $W$ the set of vertices $E^0 \setminus V$;
\item $\nu$ the sum $\sum_{v\in V}v$;
\item $\omega$ the sum  $\sum_{w\in W}w$.
\end{itemize} 
 Note that $\nu + \omega = 1_{L_K(E)}$.}
\end{nota}

\begin{remark}
{\rm Source cycles and source vertices share a number of properties, but differ in many ways  as well.  In our first few results we will be able to treat these two types of configurations simultaneously; but further on, we will need to distinguish them one from the other. }   \hfill $\Box$ 
\end{remark}

 From now on, if not differently specified, by $E$ we denote a finite graph, by $V\subseteq E$ a source and by $W$ the set of vertices $E\setminus V$.
By the {\it source elimination graph} $E\setminus V$ we mean the subgraph of $E$ from which we have eliminated all of the vertices in $V$, and all of the edges having source vertices in $V$.   We denote this  graph by $E_W$, and view it as  the ``restriction" subgraph of $E$ to $W$.   We note that $W$ is a hereditary subset of $E^0$ (because $V$ is a source).  
 

 \begin{lemma}
 Let $V$ be a source in $E$.    Then $\omega L_K(E) \omega = L_K(E_W)$.     More generally, let $U$ be any hereditary subset of $E^0$, and let $\mu = \sum_{u \in U}u$.  Then $\mu L_K(E) \mu = L_K(E_U)$.   
 \end{lemma}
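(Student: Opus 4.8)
The plan is to deduce the first statement from the second, since $W = E^0\setminus V$ is hereditary (as already noted), so that $\omega L_K(E)\omega = L_K(E_W)$ is exactly the case $U = W$, $\mu = \omega$ of the general assertion. For the general statement I would exhibit a concrete graded algebra isomorphism between $L_K(E_U)$ and the corner ring $\mu L_K(E)\mu$. The generators of $L_K(E_U)$ --- the vertices $u\in U$, the edges $e$ with $s(e)\in U$, and their ghosts $e^*$ --- all name elements of $L_K(E)$, so there is a natural candidate map $\phi\colon L_K(E_U)\to L_K(E)$ determined on generators by $u\mapsto u$, $e\mapsto e$, $e^*\mapsto e^*$. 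I would first verify that $\phi$ is a well-defined $K$-algebra homomorphism by checking the defining relations (V), (E1), (E2), (CK1), (CK2) of $L_K(E_U)$. The only relation requiring care is (CK2): for $u\in U$ one must compare $s_{E_U}^{-1}(u)$ with $s_E^{-1}(u)$. Here hereditariness of $U$ is decisive --- every edge of $E$ emitted by a vertex of $U$ has its range in $U$, hence survives in $E_U$ --- so $s_{E_U}^{-1}(u)=s_E^{-1}(u)$, the (CK2) relation at $u$ in $E_U$ coincides verbatim with the one at $u$ in $E$, and the latter holds in $L_K(E)$. The remaining relations transfer immediately since $s$ and $r$ are unchanged on edges of $E_U$.

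Next I would identify the image of $\phi$ with $\mu L_K(E)\mu$. Writing a typical spanning element of $L_K(E)$ as $\alpha\beta^*$ with $r(\alpha)=r(\beta)$, a short computation from (V), (E1), (E2) gives $\mu\alpha = \alpha$ precisely when $s(\alpha)\in U$ and $\mu\alpha = 0$ otherwise, and symmetrically $\beta^*\mu = \beta^*$ precisely when $s(\beta)\in U$. Hence $\mu L_K(E)\mu$ is the $K$-span of those $\alpha\beta^*$ with $s(\alpha),s(\beta)\in U$. Invoking hereditariness once more, $s(\alpha)\in U$ forces every vertex, and therefore every edge, of $\alpha$ to lie in $U$, so $\alpha$ (and likewise $\beta$) is a path in $E_U$. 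Thus $\mu L_K(E)\mu$ is spanned exactly by the standard spanning monomials of $L_K(E_U)$, which is precisely the image of $\phi$.

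Finally, for injectivity I would observe that $\phi$ is graded for the usual $\Z$-gradings by path length, since it carries each homogeneous generator to an element of the same degree, and that $\phi(u)=u\neq 0$ for every $u\in U = E_U^0$ (vertices being nonzero in $L_K(E)$). The Graded Uniqueness Theorem for Leavitt path algebras \cite{TheBook} then yields that $\phi$ is injective. Combining the three steps, $\phi$ restricts to an isomorphism of $L_K(E_U)$ onto $\mu L_K(E)\mu$, which is the asserted equality. I expect injectivity to be the main obstacle: one cannot argue it naively from the spanning-set description, since relations could in principle collapse distinct monomials, so the essential input is the graded uniqueness theorem together with the non-vanishing of the vertex idempotents in $L_K(E)$.
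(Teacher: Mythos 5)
Your proposal is correct, and its combinatorial core --- hereditariness of $U$ forces every path with source in $U$ to remain entirely inside $U$, so the corner $\mu L_K(E)\mu$ is spanned exactly by the monomials $\alpha\beta^*$ with $\alpha,\beta$ paths in $E_U$ --- is precisely the paper's argument, which consists of essentially just that one observation. Where you differ is in how much you prove: the paper treats $L_K(E_U)$ as literally the subalgebra of $L_K(E)$ generated by the $E_U$-monomials and stops once the spanning identification is made, silently assuming that the canonical map $L_K(E_U)\to L_K(E)$ is a well-defined injection. You make that assumption explicit and discharge it, first by checking the relations (correctly isolating (CK2) as the only one where hereditariness matters, since it guarantees $s_{E_U}^{-1}(u)=s_E^{-1}(u)$), and then by invoking the Graded Uniqueness Theorem together with the non-vanishing of vertices to get injectivity. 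This buys you an honest isomorphism of $K$-algebras rather than an equality of spanning sets, at the cost of citing graded uniqueness; the paper's terser version buys brevity but leaves the reader to supply exactly the two steps you wrote out. Your closing remark that injectivity is the genuine obstacle is well taken --- it is the one point the paper's proof does not address.
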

 \begin{proof}
 In a nonzero expression of the form $\gamma \delta^\ast \in \omega L_K(E) \omega$, we must have $s(\gamma)\in W $ and $ r(\delta^\ast)=s(\delta) \in W$.  But then all of the vertices appearing in $\gamma$ and $\delta$ are in $W$ (as $V$ is a source and hence $W$ is hereditary).     The general statement follows similarly.  
 \end{proof}

\begin{nota}\label{JandDeltanotation}
{\rm 
We denote by $J$ the two-sided ideal of $L_K(E)$ generated by $\omega$.  That is,
$$J = L_K(E) \omega L_K(E)  = \langle w | w \in W \rangle .$$ 
If  $V = \{v\}$ is a non-isolated source vertex, then $J = L_K(E)$.   This follows from the (CK2) relation:  $v = \sum_{e\in s^{-1}(v)}ee^\ast$, and each $ee^\ast = e \cdot r(e)\cdot e^\ast$ is in $J$ because $r(e)\in W$.    In case $v$ is isolated, or $V$ is a source cycle,  we get $J \neq L_K(E)$.
We denote by $\Delta = \Delta(V)$ the set of edges of $E$ having source vertex in $V$ and range vertex in $W$:
$$\Delta = \Delta(V) = \{ e\in E^1 \  | \  s(e) \in V, r(e) \in W\}.$$ 
Because each element of $\Delta(V)$ is an edge, we get that $f_i^* f_j = 0$ for $f_i \neq f_j \in \Delta(V)$.  
If $V$ is the set of vertices of a  source cycle, we fix a starting vertex $v_1$,  so that  the source cycle will be denoted by 
\[c = e_1 e_2 \cdots e_n\quad\text{with }s(e_i) = v_i\text{ and }r(e_i) = v_{i+1} \ ( \mbox{for }1\leq i \leq n-1), \mbox{ and }  r(e_n) = s(v_1).\]

\noindent 
In such a case, we denote by  
$$\Theta = \Theta(V)$$   the (countable) set of all  paths  $\epsilon$ which have both  $s(\epsilon) \in V$ and $r(\epsilon) \in V$.     (If $v$ is a source vertex, we define $\Theta(v) = \{v\}$.)     \hfill $\Box$ 
}
\end{nota}

 If $p$ is a path in $E$ with $s(p)\in V$ and $r(p) \in W$, then $p$ may be completely and uniquely described  as: 
$$p = \epsilon \delta \beta,$$
where $\epsilon \in \Theta(V)$, $\delta \in \Delta(V)$, and $\beta$ is a path in $E_W$.  
We state this more formally, and thereby obtain  a useful description of the elements of $\nu L_K(E) \omega$.  

\begin{lemma}\label{nuL(E)omega}
Any $x\in \nu L_K(E)\omega$ can be written as
\[x=\sum_j \epsilon_{j} f_j \omega x_j \omega\]
where $\epsilon_j\in \Theta(V)$, $f_j\in \Delta(V)$ and $x_j\in L_K(E)$. 
\end{lemma}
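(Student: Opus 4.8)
The plan is to reduce a general element of the corner $\nu L_K(E)\omega$ to the standard monomial form and then apply, term by term, the unique path factorization $p=\epsilon\delta\beta$ recorded immediately above the statement.

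First I would use $x=\nu x\omega$ together with the standard normal form, writing $x=\sum_i k_i\,\alpha_i\beta_i^\ast$ with $k_i\in K$, each $\alpha_i,\beta_i$ a path, and $r(\alpha_i)=r(\beta_i)$. Left multiplication by $\nu$ and right multiplication by $\omega$ then act as projections on each monomial: for a term $\alpha\beta^\ast$ one has $v\,\alpha\beta^\ast=\alpha\beta^\ast$ exactly when $v=s(\alpha)$, and $\alpha\beta^\ast\,v=\alpha\beta^\ast$ exactly when $v=s(\beta)$ (using (V), (E1), (E2)). Hence every surviving summand of $x=\sum_i k_i\,(\nu\alpha_i)(\beta_i^\ast\omega)$ satisfies $s(\alpha_i)\in V$ and $s(\beta_i)\in W$, and I may discard the rest. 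So without loss of generality each monomial of $x$ has this property.

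Next I would observe that because $W$ is hereditary and $s(\beta_i)\in W$, the whole path $\beta_i$ lies in $W$; in particular $r(\alpha_i)=r(\beta_i)\in W$. Thus each $\alpha_i$ is a path with $s(\alpha_i)\in V$ and $r(\alpha_i)\in W$, which is precisely the hypothesis needed to invoke the factorization displayed just before the lemma: $\alpha_i=\epsilon_i\delta_i\beta_i'$ with $\epsilon_i\in\Theta(V)$, $\delta_i\in\Delta(V)$, and $\beta_i'$ a path in $E_W$. Finally I would reassemble: setting $f_i:=\delta_i$ and $x_i:=k_i\,\beta_i'\beta_i^\ast$, the facts $s(\beta_i'),s(\beta_i)\in W$ give $\beta_i'\beta_i^\ast\in\omega L_K(E)\omega$, so $\omega x_i\omega=x_i$ and therefore $\epsilon_i f_i\,\omega x_i\omega=\epsilon_i\delta_i\beta_i'\beta_i^\ast\,k_i=k_i\,\alpha_i\beta_i^\ast$. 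Summing over $i$ yields $x=\sum_j\epsilon_j f_j\,\omega x_j\omega$, as required.

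The argument is largely bookkeeping once the decomposition $p=\epsilon\delta\beta$ is available, so I do not expect a serious obstacle. The one point genuinely requiring care is the second step: one must confirm that every surviving monomial's real path $\alpha_i$ both starts in $V$ and ends in $W$, so that the factorization theorem actually applies. This is exactly where the hereditariness of $W$ (forcing $r(\beta_i)\in W$ from $s(\beta_i)\in W$) is indispensable, and it is worth stating explicitly rather than leaving implicit.
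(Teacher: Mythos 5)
Your proof is correct and follows essentially the same route as the paper's: write $x$ in normal form, use that $\nu$ and $\omega$ force $s(\alpha_i)\in V$ and $s(\beta_i)\in W$, invoke the hereditariness of $W$ to get $r(\alpha_i)=r(\beta_i)\in W$, and then apply the factorization $p=\epsilon\delta\beta$. The only cosmetic difference is that the paper additionally remarks that one may normalize so that the prefixes $\epsilon_j f_j$ are pairwise distinct (absorbing scalars into the $x_j$), a convention it uses in later arguments but which is not part of the statement itself.
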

\begin{proof}
This follows since any element $x \in \nu L_K(E) \omega$ can be written as  
 $\Sigma_j \gamma_j\delta_j^*$, where the $\gamma_j$'s are real paths such that $s(\gamma_j)\in V$ and  $r(\gamma_j)=r(\delta_j)$. Since $r(\delta_j^*)=s(\delta_j)\in W$ and  $W$ is hereditary, then  $r(\delta_j)=r(\gamma_j)\in W$, so we can describe the $\gamma_j$'s using  the previous discussion.  (We note  that  we can assume $\epsilon_{j} f_j\neq \epsilon_{\ell} f_\ell$ for $j\neq \ell$, and that any  coefficients in $K$  have been absorbed in the $x_j$ expressions.)     \end{proof}

%
%
 \begin{lemma}\label{rem:ortogonali}
Consider two paths $\epsilon_1f_1$ and $\epsilon_2f_2$, with $\epsilon_i\in \Theta(V)$, $f_i\in \Delta(V)$, and assume  $\epsilon_1f_1\neq \epsilon_2f_2$. Then $f_1^*\epsilon_1^* \epsilon_2f_2=0$.
\end{lemma}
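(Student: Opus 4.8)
The plan is to read the product as $f_1^*\epsilon_1^*\epsilon_2 f_2 = (\epsilon_1 f_1)^*(\epsilon_2 f_2)$, that is, as the ghost path attached to the real path $\epsilon_1 f_1$ times the real path $\epsilon_2 f_2$, and then to invoke the standard ``prefix dichotomy'' for such products. Concretely, for any two real paths $p = p_1\cdots p_k$ and $q = q_1\cdots q_m$, applying (CK1) repeatedly to cancel the innermost adjacent pairs $p_1^* q_1$, then $p_2^* q_2$, and so on, shows that $p^* q$ is nonzero only when $p$ and $q$ agree edge by edge as far as the shorter one runs; equivalently, $p^* q \neq 0$ forces one of $p,q$ to be an initial subpath (prefix) of the other. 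If a mismatch $p_i \neq q_i$ is reached first then the factor $p_i^* q_i$ vanishes and annihilates the whole product. I would record this as a one-line preliminary observation, justified directly from (CK1).

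The heart of the argument is then a purely combinatorial claim about the shapes of $\epsilon_1 f_1$ and $\epsilon_2 f_2$: neither can be a \emph{proper} prefix of the other. Here I would use the structure behind the definitions of $\Theta(V)$ and $\Delta(V)$ in the Notation above. Each $\epsilon_i \in \Theta(V)$ consists only of edges of the source cycle: since $W$ is hereditary a path cannot re-enter $V$ after leaving it, and the source-cycle condition $|r^{-1}(v)| = 1$ forces every edge landing in $V$ to be a cycle edge, so a path beginning and ending in $V$ stays on the cycle. By contrast each $f_i \in \Delta(V)$ has $r(f_i) \in W$, hence is not a cycle edge. Thus in each of $\epsilon_1 f_1$ and $\epsilon_2 f_2$ the unique non-cycle edge is the final one. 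Were $\epsilon_1 f_1$ a proper prefix of $\epsilon_2 f_2$, the $\Delta$-edge $f_1$ would occur in $\epsilon_2 f_2$ in a non-final position, contradicting that $f_2$ is the only $\Delta$-edge of $\epsilon_2 f_2$ and sits at the end; the reverse inclusion is ruled out symmetrically. Hence the two paths either coincide or diverge.

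Since $\epsilon_1 f_1 \neq \epsilon_2 f_2$ by hypothesis, they are not equal, so by the claim neither is a prefix of the other, and the preliminary observation yields $(\epsilon_1 f_1)^*(\epsilon_2 f_2) = 0$. I would close by dispatching the one degenerate configuration that still deserves a word: when $\epsilon_1 = \epsilon_2$ the common cycle part cancels completely, leaving the vertex $r(\epsilon_1)$, and the product collapses to $f_1^* f_2$ with $f_1 \neq f_2$, which vanishes by the identity $f_i^* f_j = 0$ for distinct $f_i, f_j \in \Delta(V)$ recorded above; and when $V$ is a source vertex, every $\epsilon_i$ is that vertex and the statement reduces verbatim to $f_1^* f_2 = 0$.

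I expect the main (though mild) obstacle to be the combinatorial prefix claim together with the bookkeeping of the length-zero cases, rather than any algebraic subtlety: once the product is rewritten as $(\epsilon_1 f_1)^*(\epsilon_2 f_2)$ and the prefix dichotomy is in hand, the conclusion is forced by the observation that a $\Delta$-edge can appear only at the very end of such a path. A more pedestrian alternative, which avoids citing the general dichotomy, would split into cases according to whether $s(\epsilon_1) = s(\epsilon_2)$ and, when they agree, according to which of $\epsilon_1, \epsilon_2$ is longer, cancelling via (CK1) and using that the ghost of a $\Delta$-edge annihilates any real cycle edge; this trades the clean reduction for a few explicit computations.
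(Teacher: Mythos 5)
Your proof is correct and is essentially the paper's own argument: both rest on the (CK1) ``prefix dichotomy'' together with the observation that an edge of $\Delta(V)$ (range in $W$) can never equal a cycle edge of $\epsilon_i$ (range in $V$). The only difference is packaging --- you apply the dichotomy once to the concatenated paths $(\epsilon_1 f_1)^*(\epsilon_2 f_2)$ and rule out proper prefixes by the position of the unique $\Delta$-edge, whereas the paper first splits on $\epsilon_1=\epsilon_2$ versus $\epsilon_1\neq\epsilon_2$, cancels $\epsilon_1^*\epsilon_2$ to a leftover $(\epsilon')^{*}$ or $\epsilon'$, and then kills the residual product $(\epsilon')^*f_2$ or $f_1^*\epsilon'$ by the same edge-mismatch.
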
 
{\it Proof.}  Indeed if $\epsilon_1f_1\neq \epsilon_2f_2$ and $\epsilon_1=\epsilon_2$ then $f_1\not=f_2$ and hence
\[f_1^*\epsilon_1^* \epsilon_2f_2=f_1^*f_2=0.\]
On the other hand, assume $\epsilon_1\not=\epsilon_2$; if $\epsilon_1^* \epsilon_2\not=0$ then either $\epsilon_1=\epsilon_2\cdot\epsilon'$, or $\epsilon_2=\epsilon_1\cdot\epsilon'$.
In the first case we get
$\epsilon_1^* \epsilon_2f_2=(\epsilon')^* f_2=0,$ while 
in the second case $f_1^*\epsilon_1^* \epsilon_2=f_1^\ast\epsilon'=0.$ \hfill $\Box$

 \smallskip
 
\begin{lemma}\label{lemma:idempotent}  If $\gamma$ is any path in $E$, and $R = L_K(E)$, then:
\begin{enumerate}
\item $ \gamma \gamma^*$ is an idempotent in $R$;
\item $  \gamma \gamma^* \gamma = \gamma$ and   $\gamma^*\gamma\gamma^*=\gamma^*$;
\item $   R \gamma^* = R \gamma \gamma^*$  as left ideals of $R$;
\item $  R \gamma^* \cong Rw$  as left R-modules,   where $w = r(\gamma)$.    More generally, if $y \in R$, then $  R y\gamma^* \cong Ryw$.   
\end{enumerate}
\end{lemma}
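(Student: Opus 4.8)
The plan is to reduce everything to one computational identity together with the absorption relations, after which all four parts become formal. The identity I would establish first is that, for any path $\gamma = e_1\cdots e_n$, one has $\gamma^*\gamma = r(\gamma)$ — the collapse of a ghost path followed by its real path down to the range vertex. This follows by induction on $n$: the base case is (CK1), and the inductive step repeatedly uses $e_i^*e_i = r(e_i) = s(e_{i+1})$ together with (E1) in the form $s(e_{i+1})e_{i+1} = e_{i+1}$. Alongside it I would record the two absorption identities $\gamma\, r(\gamma) = \gamma$ and $r(\gamma)\gamma^* = \gamma^*$, which come directly from (E1) and (E2) applied to the final and first edges of $\gamma$. (For a vertex, viewed as a path of length $0$, all of these are instances of (V).) Once these are in hand, parts (1) and (2) are immediate: $(\gamma\gamma^*)^2 = \gamma(\gamma^*\gamma)\gamma^* = \gamma\, r(\gamma)\,\gamma^* = \gamma\gamma^*$ gives idempotency, while $\gamma\gamma^*\gamma = \gamma\,r(\gamma) = \gamma$ and $\gamma^*\gamma\gamma^* = r(\gamma)\gamma^* = \gamma^*$ give part (2).

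For part (3) I would prove the two inclusions separately. The containment $R\gamma\gamma^* \subseteq R\gamma^*$ is clear since $\gamma\gamma^* = \gamma\cdot\gamma^*$. For the reverse, part (2) yields $\gamma^* = \gamma^*\gamma\gamma^* = \gamma^*(\gamma\gamma^*) \in R\gamma\gamma^*$, so $R\gamma^* \subseteq R\gamma\gamma^*$, and the two inclusions give the asserted equality of left ideals.

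For part (4) I would exhibit an explicit pair of mutually inverse left $R$-module homomorphisms, each realized as a restriction of right multiplication (which automatically commutes with the left action, hence is $R$-linear — this is how I would dispatch well-definedness and $R$-linearity at once). Writing $w = r(\gamma)$, right multiplication by $\gamma$ carries $Ry\gamma^*$ into $Ry\gamma^*\gamma = Ryw$ since $\gamma^*\gamma = w$, and right multiplication by $\gamma^*$ carries $Ryw$ into $Ryw\gamma^* = Ry\gamma^*$ since $w\gamma^* = \gamma^*$. The composites are the respective identities: on $Ry\gamma^*$ one computes $ry\gamma^*\gamma\gamma^* = ryw\gamma^* = ry\gamma^*$, and on $Ryw$ one computes $ryw\gamma^*\gamma = ry\gamma^*\gamma = ryw$, both using $\gamma^*\gamma = w$ and $w\gamma^* = \gamma^*$. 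The first assertion $R\gamma^* \cong Rw$ is then the special case $y = w$, noting $Rw\gamma^* = R\gamma^*$ (as $w\gamma^* = \gamma^*$) and $Rww = Rw$.

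I expect no serious obstacle here; the only points demanding care are the inductive verification of $\gamma^*\gamma = r(\gamma)$ for paths rather than single edges, and the bookkeeping in part (4) that the right-multiplication maps actually land in the claimed submodules and compose to the identity. Both are routine once the collapse identity and the absorption relations are established, so the real work is simply front-loading those three identities cleanly.
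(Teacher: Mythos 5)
Your proof is correct and follows essentially the same route as the paper: parts (1)--(3) reduce to the collapse identity $\gamma^*\gamma = r(\gamma)$ (which the paper treats as trivial), and part (4) uses exactly the same pair of mutually inverse right-multiplication maps $ryw\mapsto ryw\gamma^*$ and $ry\gamma^*\mapsto ry\gamma^*\gamma = ryw$. Your version simply supplies the routine details the paper omits.
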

\begin{proof}
$(1)$ are $(2)$ and trivial. $(3)$ follows from $(2)$.
The isomorphism in $(4)$ is established by considering  the two morphisms $Ryw\to Ry\gamma^*$ via $ryw\mapsto ryw\gamma^*$, and $Ry\gamma^* \to Ryw$ via $ry\gamma^* \mapsto  (ry\gamma^*)\gamma=   ryw$.  
\end{proof}
 
 A description of the two-sided ideal $J$ as a left ideal will be central to the discussion.
 
\begin{lemma}\label{Jasleftideal}
As a left ideal of $R = L_K(E)$,  
$$J=R \omega  \oplus (\oplus_{f\in \Delta(V); \epsilon \in \Theta }Rf^*\epsilon^*).$$
\end{lemma}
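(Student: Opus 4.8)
The plan is to prove the set-theoretic equality $J = R\omega + \sum_{f\in\Delta(V),\,\epsilon\in\Theta} Rf^*\epsilon^*$ first, and then to verify that the sum is direct. For the inclusion $\supseteq$ I would note that $R\omega\subseteq J$ trivially, since $\omega\in J$; and that each $f^*\epsilon^*$ lies in $J$ because (E2) gives $f^*=r(f)f^*$ with $r(f)\in W$, so $f^*\epsilon^*=r(f)\,f^*\epsilon^*\in J$ as $J$ is two-sided and contains every $w\in W$. Hence $Rf^*\epsilon^*\subseteq J$, and the whole right-hand side sits inside $J$.

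For the reverse inclusion, I would use that the right-hand side is a left ideal and that $J=R\omega R$ is, as a left ideal, generated by $\omega R$; thus it suffices to show $\omega R$ is contained in the right-hand side. Writing a generic element of $R$ as a $K$-combination of terms $\gamma\delta^*$, I would observe that $\omega\gamma\delta^*=\gamma\delta^*$ when $s(\gamma)\in W$ and is $0$ otherwise. In the nonzero case, heredity of $W$ forces $r(\delta)=r(\gamma)\in W$, and I would split on $s(\delta)$: if $s(\delta)\in W$ then $\delta^*=\delta^*\omega$, so $\gamma\delta^*\in R\omega$; if $s(\delta)\in V$ then $\delta$ is a path from $V$ to $W$, and the unique factorization $\delta=\epsilon'\delta'\beta'$ described just before Lemma \ref{nuL(E)omega} gives $\gamma\delta^*=(\gamma\beta'^*)\delta'^*\epsilon'^*\in R\delta'^*\epsilon'^*$ with $\delta'\in\Delta(V)$ and $\epsilon'\in\Theta$. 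This places $\omega R$, and hence all of $J$, inside the right-hand side.

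The main work, and the step I expect to be the principal obstacle, is directness. The plan is to start from a vanishing finite combination $r_0\omega+\sum_{f,\epsilon}r_{f,\epsilon}f^*\epsilon^*=0$ and project by right multiplication by a fixed $\epsilon_0 f_0$ (with $\epsilon_0\in\Theta$, $f_0\in\Delta(V)$). The term $r_0\omega$ is annihilated because $\omega\epsilon_0=0$ (as $s(\epsilon_0)\in V$); every cross term with $\epsilon f\neq \epsilon_0 f_0$ vanishes by Lemma \ref{rem:ortogonali}; and the surviving diagonal term collapses via $f_0^*\epsilon_0^*\epsilon_0 f_0=r(f_0)$, using (CK1) together with (E2). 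This yields $r_{f_0,\epsilon_0}\,r(f_0)=0$, and then $r_{f_0,\epsilon_0}f_0^*\epsilon_0^*=r_{f_0,\epsilon_0}\,r(f_0)\,f_0^*\epsilon_0^*=0$ for every pair $(f_0,\epsilon_0)$, forcing $r_0\omega=0$ as well.

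The delicate points to get right in this last step are, first, that an equality of paths $\epsilon f=\epsilon_0 f_0$ genuinely forces $\epsilon=\epsilon_0$ and $f=f_0$ (so that the $\Theta$–$\Delta$ splitting identifies a unique summand and the projection isolates a single index), and second, that the single multiplier $\epsilon_0 f_0$ simultaneously separates the $R\omega$ component from all of the $Rf^*\epsilon^*$ components, so that both parts of the directness claim are handled at once. Granting these, the two inclusions together with directness give the stated decomposition $J=R\omega\oplus(\oplus_{f,\epsilon}Rf^*\epsilon^*)$.
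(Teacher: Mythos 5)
Your proof is correct and follows essentially the same route as the paper's: both inclusions rest on the $\Theta$--$\Delta$ factorization of paths from $V$ into $W$ (the discussion before Lemma \ref{nuL(E)omega}), and directness rests on Lemma \ref{rem:ortogonali}. The paper packages the directness slightly differently --- via the modular-law split $J=R\omega\oplus(R\nu\cap J)$ and the observation that the elements $\epsilon f f^*\epsilon^*$ are orthogonal idempotents --- but your explicit right-multiplication by $\epsilon_0 f_0$ is the same mechanism.
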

\begin{proof}
We know that $J= \langle w \ | \ w\notin V \rangle = R\omega R$  as a two-sided ideal. Since $R=R\omega\oplus R\nu$ and $R\omega\leq J$, we have $J=R\omega\oplus (R\nu \cap J)$. 
Let $m\in R\nu\cap J$; since $m\in J$, then  $m=(\sum \tau_i \kappa_i^* ) \omega (\sum \gamma_i\delta^*_i )$. 
Since $s(\gamma_i)\in W$ and $W$ is hereditary, also $r(\gamma_i)=s(\delta_i^*)=r(\delta_i)$ belongs to $W$. Since $m\in R\nu$, then $r(\delta_i^*)=s(\delta_i)\in V$; by Lemma~\ref{nuL(E)omega} we have
\[\delta^\ast_i=(\epsilon_{i} f_i \omega x_i\omega)^*=\omega x_i^* \omega  f_i^\ast\epsilon_i^\ast.\]

  So  $m$ belongs to  $\displaystyle\sum_i Rf_i^* \epsilon_i^*=\displaystyle\sum_i R \epsilon_i f_i f_i^* \epsilon_i^*$, where each $R \epsilon_i f_i f_i^*\epsilon_i^*$ is a direct summand of $_RR$ since its generator $\epsilon_i f_i f_i^* \epsilon_i^*$ is  an idempotent  by Lemma~\ref{lemma:idempotent}. Moreover, by Lemma~\ref{rem:ortogonali},  $\epsilon_i f_i f_i^* \epsilon_i^*$ is orthogonal to $\epsilon_j f_j f_j^* \epsilon_j^*$ unless $f_i=f_j$ and  $\epsilon_i=\epsilon_j $. 
This establishes that $J$ is contained in the displayed direct sum.  

The reverse containment is clear, since $f^* = \omega f^*$ for any $f\in \Delta(V)$ (as $r(f)\in W$ by definition).  
\end{proof}

We note that it is possible for the summand $\oplus_{f\in \Delta(V); \epsilon\in \Theta}Rf^*\epsilon^*$ which appears in the previous result to be zero; this happens in case $V$ is isolated, in which case $\Delta(V)$ is empty.

\smallskip

The general program here is to establish that $L_K(E)$ is \Bez \ by showing that the \Bez  \ property passes from various factor rings and subalgebras of $L_K(E)$ back to $L_K(E)$.  First of all, we record a useful observation which will facilitate passing the \Bez \ property to a ring  from its subrings.

\begin{lemma}\label{subringBezout}
Let $R$ be any ring, and    let $x_1, x_2,  \dots , x_n \in R$.   Let $S$ be a unital subring of $R$.  (If $R$ is unital, we do {\rm  not} require that $1_R = 1_S$).   Suppose  $\{x_1, x_2,  \dots , x_n \} \subseteq S,$ and suppose the left $S$-ideal $Sx_1 + Sx_2 + \cdots + Sx_n$ is principal; i.e., suppose there exists $x\in S$ for which $Sx_1 + Sx_2 + \cdots + Sx_n = Sx$.    Then $Rx_1 + Rx_2 + \cdots + Rx_n = Rx$.   In particular, if every finite subset of $R$ is contained in a unital  \Bez \ subring of $R$, then $R$ is \Bez. 
\end{lemma}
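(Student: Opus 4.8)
The plan is to prove the displayed equality $Rx_1 + Rx_2 + \cdots + Rx_n = Rx$ by establishing the two inclusions separately, and then to read off the final ``in particular'' assertion as an immediate consequence. The one hypothesis I would keep constantly in view is that $S$ is \emph{unital}: this guarantees that $1_S$ acts as an identity on every element of $S$, which is exactly what lets the computation go through even when $1_S\neq 1_R$ (or when $R$ is not unital at all). This is the only place where the fact that it is $S$, rather than $R$, that is assumed unital does genuine work, so it is the step I would be most careful to spell out.

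For the inclusion $Rx\subseteq Rx_1+\cdots+Rx_n$, I would begin from the hypothesis $Sx = Sx_1+\cdots+Sx_n$. Since $x = 1_S x \in Sx$, this equality produces an expression $x = \sum_i s_i x_i$ with each $s_i\in S\subseteq R$; then for any $r\in R$ we have $rx = \sum_i (rs_i)x_i \in \sum_i Rx_i$, giving the inclusion. For the reverse inclusion I would note that each generator satisfies $x_i = 1_S x_i \in Sx_i \subseteq Sx_1+\cdots+Sx_n = Sx$, so there is some $t_i\in S$ with $x_i = t_i x$; hence $rx_i = (rt_i)x\in Rx$ for every $r\in R$, and summing over $i$ yields $\sum_i Rx_i\subseteq Rx$. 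Combining the two inclusions gives $Rx_1+\cdots+Rx_n = Rx$.

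For the concluding sentence, let $I$ be a finitely generated one-sided ideal of $R$; say it is a left ideal $I = Rx_1+\cdots+Rx_n$. By hypothesis the finite set $\{x_1,\dots,x_n\}$ lies in some unital \Bez\ subring $S$ of $R$, so the finitely generated left $S$-ideal $Sx_1+\cdots+Sx_n$ is principal, equal to $Sx$ for some $x\in S$; the first part then shows $I = Rx$ is principal. The identical argument carried out with right modules (using that $S$ is \emph{right} \Bez\ as well) disposes of finitely generated right ideals, so $R$ is \Bez. I do not expect a genuine obstacle here: the difficulty is entirely bookkeeping, namely tracking the identity $1_S$ correctly and remembering that the \Bez\ hypothesis is invoked for $S$ and not for $R$.
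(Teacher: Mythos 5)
Your proof is correct and follows essentially the same route as the paper's: both directions are obtained by writing $x=\sum_i s_i x_i$ and $x_i=t_i x$ with coefficients in $S$ (using $1_S$ to place $x$ and each $x_i$ inside the relevant $S$-ideals) and then multiplying by elements of $R$. The concluding ``in particular'' step is handled exactly as intended.
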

\begin{proof}
Since  $1_S x_i = x_i$ for all $1\leq i \leq n$, we have in particular that each $x_i$ is in $Sx_1 + Sx_2 + \cdots + Sx_n$, so we get that for each $i$ there exists $s_i \in S$ with $x_i = s_i x$.     But then $Rx_1 + Rx_2 + \cdots + Rx_n = Rs_1x + Rs_2x + \cdots + Rs_nx \subseteq Rx$.    The reverse containment follows as well:  $ x = 1_S x \in Sx$, so $x\in  Sx_1 + Sx_2 + \cdots + Sx_n \subseteq Rx_1 + Rx_2 + \cdots + Rx_n$.   
\end{proof}


\begin{proposition}\label{idealsinRomega}
Let $I$ be a finitely generated left ideal of $R=L_K(E)$ with $I \leq R\omega$.   Suppose $\omega R \omega$ is \Bez.   Then $I = Ry$ for some $y\in \omega R \omega$.  Moreover, there exists a split epimorphism  $R\omega \to I \to0$. 
\end{proposition}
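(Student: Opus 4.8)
The plan is to compress the problem into the corner ring $S:=\omega R\omega$, which is \Bez\ by hypothesis, solve it there, and transport the solution back to $R$. Writing $I=Ra_1+\cdots+Ra_n$ with each generator normalized as $a_i=a_i\omega\in R\omega$, I first show that the compression $\omega I=\{\omega m : m\in I\}$ is a \emph{finitely generated} left ideal of $S$; since $S$ is \Bez\ this produces $\omega I=Sy$ for a single $y\in\omega R\omega$. I then prove $I=Ry$, and finally extract the split epimorphism using that $R$ is hereditary.

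For the finiteness step I use the Peirce decomposition $a_i=\omega a_i+\nu a_i$, setting $b_i:=\omega a_i\in S$, and by Lemma~\ref{nuL(E)omega} I write $\nu a_i=\sum_j \epsilon_{ij}f_{ij}\,c_{ij}$ with $\epsilon_{ij}\in\Theta$, $f_{ij}\in\Delta$, distinct $\epsilon_{ij}f_{ij}$, and $c_{ij}=r(f_{ij})c_{ij}\in S$. The local finiteness observation is that for a single real path $h=\epsilon f$ (from $V$ to $W$) one has $Rh=R\,r(f)$ as left ideals of $R$ — since $h=h\,r(f)$ and $r(f)=h^*h$ — so after compressing, $\omega R h=S\,r(f)$ is \emph{cyclic} over $S$. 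A direct check then shows $\omega I=\sum_i \omega R a_i$ is generated over $S$ by the finite set $\{b_i\}\cup\{c_{ij}\}$: for $x=\omega\sum_i r_ia_i$ each term $\omega r_i b_i\in Sb_i$, while $\omega r_i\epsilon_{ij}f_{ij}\in S\,r(f_{ij})$ forces $\omega r_i\epsilon_{ij}f_{ij}c_{ij}\in S\,r(f_{ij})c_{ij}=Sc_{ij}$. Applying the \Bez\ hypothesis to $S$ yields $\omega I=Sy$ with $y\in\omega R\omega$.

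Next I show $I=Ry$. The inclusion $Ry\subseteq I$ is immediate since $y\in Sy=\omega I\subseteq I$. For the reverse inclusion I take $a\in I$, split $a=\omega a+\nu a$, and note $\omega a\in\omega I=Sy\subseteq Ry$. For the bottom piece, write $\nu a=\sum_j \epsilon_j f_j c_j$ as above; the crucial identity is $f_j^*\epsilon_j^*\,a=c_j$. This holds because $\epsilon_j^*\omega=0$ kills the contribution of $\omega a$, while Lemma~\ref{rem:ortogonali} collapses $f_j^*\epsilon_j^*\,\nu a$ to the single surviving term $f_j^*\epsilon_j^*\epsilon_j f_j\,c_j=r(f_j)c_j=c_j$. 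Since $f_j^*\epsilon_j^*a\in I$ and lies in $\omega R\omega$, it in fact lies in $\omega I=Sy$, so each $c_j\in Ry$; as $Ry$ is a left ideal, $\nu a=\sum_j \epsilon_j f_j\,c_j\in Ry$. Hence $a\in Ry$, giving $I=Ry$ with $y\in\omega R\omega$.

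Finally, for the split epimorphism, the map $R\omega\to I$, $m\mapsto my$, is a well-defined $R$-linear surjection onto $R\omega y=Ry=I$ (using $y=\omega y$). Because $R$ is hereditary the left ideal $I$ is projective, so $\mathrm{id}_I$ lifts through this surjection, producing the splitting $R\omega\to I\to 0$. I expect the first step to be the main obstacle: in the source-cycle case $\Theta$ is infinite, so $\nu R\omega$ is \emph{not} finitely generated over $S$, and the argument must exploit both that only finitely many paths $\epsilon_{ij}f_{ij}$ occur in a fixed finite generating set and that each compressed piece $\omega R(\epsilon f)$ is cyclic; the reconstruction identity $f_j^*\epsilon_j^*a=c_j$ is then what tames the $\nu$-component when upgrading ``$\omega I$ principal'' to ``$I$ principal''.
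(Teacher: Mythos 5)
Your proof is correct and follows essentially the same route as the paper's: Peirce-decompose the generators via $\nu+\omega$ and Lemma~\ref{nuL(E)omega}, recover the coefficients with the orthogonality relation $f_j^*\epsilon_j^*a=c_j$ (Lemma~\ref{rem:ortogonali}), apply the B\'{e}zout hypothesis in the corner $S=\omega R\omega$, and use hereditariness for the splitting. The only (cosmetic) difference is that you package the relevant finitely generated $S$-ideal as the compression $\omega I$, whereas the paper writes it directly as $\sum_i S\omega r_i\omega+\sum_{i,j}S\omega r_j^i\omega$ and invokes Lemma~\ref{subringBezout}.
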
  

{\bf Proof.}   Write  $I = R r_1 \omega + \cdots + R r_m \omega$ for $r_1, \dots, r_m \in R$.  
 For each $1\leq i \leq m$, use Lemma \ref{nuL(E)omega}  to write  $r_i \omega = (\nu + \omega) r_i \omega = \nu r_i \omega + \omega r_i \omega =   ( \sum_j \epsilon_j f_j \omega r_j^i \omega ) + \omega r_i \omega$ for some $r_j^i\in R$.

Let $S=\omega R \omega$  and consider  the finitely generated  left $S$-ideal $\sum_i S  \omega r_i \omega + \sum_{i, j} S\omega r_j^i \omega $. 
 By the \Bez \ hypothesis on $S$,  there exists $y\in S$ such that $\sum_i S  \omega r_i \omega + \sum_{i, j} S\omega r_j^i \omega  =Sy$.  So, by  Lemma~\ref{subringBezout}, $\sum_i R  \omega r_i \omega + \sum_{i, j} R\omega r_j^i \omega = Ry$
and hence $I\leq Ry$.

 Conversely, $\omega r_i\omega \in I$ and $\nu r_i \omega \in I$ (since $r_i \omega \in I$).  And $\omega r_j^i \omega=f_j^*\epsilon_j^* \nu r_i\omega$ (this follows since if  $j\neq k$ then $\epsilon_jf_j\neq \epsilon_k f_k$ and so $f^*_j(\epsilon_j^*)\epsilon_kf_k=0$) . So $\omega r_j^i\omega \in I$. Since $y\in  \sum_i R  \omega r_i \omega + \sum_{i, j} R\omega r_j^i \omega $, we get $Ry \leq  I$.
 
 Summarizing, we have shown that if $I$ is a finitely generated left ideal of $R$ which is contained in $R\omega$, then $I = Ry$ for some  $y\in \omega R \omega = S$.  
 
 Since $R = L_K(E)$  is hereditary, necessarily $Ry$ is projective, so the epimorphism $\varphi: R\to Ry\to 0$  (sending $1\mapsto y$) splits. Notice that since $\nu y=0$ we get that $\varphi_{|_{R\nu}}=0$ and so we have a split epimorphism $R\omega \to Ry\to 0$.   \hfill $\Box$
 



 



\begin{lemma}\label{lemmaforidealsinRdeltastar}
Let $R$ denote $L_K(E)$, and let $V$ be a source in $E$.  Suppose  $\omega R \omega $ is \Bez.  Let $B\leq R \delta^*$ be a finitely generated left ideal, where $\delta$ is a real path  with $w = r(\delta)\in W$.   Then $B$ is principal; indeed, $B = Ry\delta^*$ for some $y\in \omega R \omega$.
\end{lemma}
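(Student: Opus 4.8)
The plan is to reduce this statement to Proposition~\ref{idealsinRomega} by transporting $B$ across the left $R$-module isomorphism $R\delta^\ast \cong Rw$ furnished by Lemma~\ref{lemma:idempotent}(4), where $w = r(\delta)$. Concretely, I would work with the map $\varphi\colon R\delta^\ast \to Rw$ given by $x \mapsto x\delta$, whose inverse is $z \mapsto z\delta^\ast$; that these are mutually inverse left $R$-module isomorphisms is exactly the content of Lemma~\ref{lemma:idempotent}(4) (with $\gamma = \delta$), using $\delta^\ast\delta = w$ and $w\delta^\ast = \delta^\ast$.

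First I would observe that, since $\varphi$ is an isomorphism of left $R$-modules and $B$ is a finitely generated left $R$-submodule of $R\delta^\ast$, the image $\varphi(B)$ is a finitely generated left ideal of $R$ contained in $Rw$. Because $w \in W$ we have $w = \omega w$, so $Rw \le R\omega$; hence $\varphi(B)$ is a finitely generated left ideal of $R$ with $\varphi(B) \le R\omega$.

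Next, since $\omega R \omega$ is \Bez \ by hypothesis, Proposition~\ref{idealsinRomega} applies to $\varphi(B)$ and yields an element $y \in \omega R \omega$ with $\varphi(B) = Ry$. Pulling back along the isomorphism $\varphi^{-1}$, which is again a left $R$-module map, gives $B = \varphi^{-1}(Ry) = R\,\varphi^{-1}(y)$. As $y \in \varphi(B) \le Rw$, we have $y = yw$ and therefore $\varphi^{-1}(y) = y\delta^\ast$, so that $B = Ry\delta^\ast$ with $y \in \omega R \omega$, as required.

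I do not expect a genuine obstacle here: the substantive work has already been done in Proposition~\ref{idealsinRomega} (which handles finitely generated left ideals inside $R\omega$) and in Lemma~\ref{lemma:idempotent}(4) (which identifies $R\delta^\ast$ with the left ideal $Rw$, living inside $R\omega$). The only points requiring care are bookkeeping: checking that $\varphi$ carries left ideals to left ideals (immediate, since it is a left $R$-module isomorphism and $Rw$ is itself a left ideal of $R$), and verifying the containment $Rw \le R\omega$ that makes Proposition~\ref{idealsinRomega} applicable. Thus the lemma is essentially a clean transport-of-structure argument across the isomorphism $R\delta^\ast \cong Rw$.
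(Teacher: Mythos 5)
Your argument is correct and is essentially identical to the paper's own proof: both transport $B$ to the finitely generated left ideal $B\delta \le Rw \le R\omega$ via the isomorphism of Lemma~\ref{lemma:idempotent}(4), invoke Proposition~\ref{idealsinRomega} to write $B\delta = Ry$ with $y \in \omega R \omega$, and recover $B = B\delta\delta^* = Ry\delta^*$ using $B \le R\delta^*$. No issues.
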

\begin{proof}
By Lemma \ref{lemma:idempotent}(4) we have that the map $R\delta^* \to Rw$ given by right multiplication by $\delta$ is an isomorphism of left $R$-modules, so that $(B)\delta$ is a finitely generated  submodule of $Rw$; but $Rw \leq R\omega$, so $(B)\delta$ is isomorphic to a left ideal of $R$ contained in $R\omega$.   So Proposition \ref{idealsinRomega} applies to $(B)\delta$, to yield $(B)\delta = Ry $ for some $y\in \omega R \omega$.   Then $ (B)\delta \delta^* = (Ry)\delta^* = Ry\delta^*$; but $(B)\delta \delta^* = B$ because $B \leq R \delta^*$.
\end{proof}

We now  establish a result similar in flavor to Proposition \ref{idealsinRomega}, but for ideals contained in the other type of summands of $J$.    \begin{nota}\label{treenotation}
{\rm  Let $E$ be a graph, and $v\in E^0$.   The {\it tree of} $v$, denoted $T(v)$, is the smallest hereditary subset of $E^0$ containing $v$.  Formally,  
 $$T(v) = \{ u \in E^0 \ | \ \mbox{there exists a path } p \mbox{ in } E \mbox{ having } s(p) = v \mbox{ and } r(p) = u\}.$$
 }
  \end{nota}
 
\begin{proposition}\label{idealsinRdeltastar}
 Let $R$ denote $L_K(E)$, and let $V$ be a source in $E$.   Let  $B\leq R \delta^*$, where $\delta$ is a real path  with $r(\delta)\in W$.   Set
 $$W' := T(r(\delta)), \mbox{ and define } \omega' := \sum_{u\in W'}u.$$  
     Suppose $\omega R \omega$ and $\omega' R \omega'$ are \Bez.  Then $B$ is isomorphic to a direct summand of $R\omega'$.  Indeed, there is a split epimorphism $R\omega' \to B \to 0$.  
\end{proposition}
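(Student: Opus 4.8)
The plan is to transport the problem, via right multiplication by $\delta$, into the corner algebra $S' := \omega' R \omega' = L_K(E_{W'})$ (which equals $L_K(E_{W'})$ by the first Lemma, since $W' = T(r(\delta))$ is hereditary), and then to exploit the hypothesis that $S'$ is \Bez. Note first that a split epimorphism $R\omega' \to B \to 0$ forces $B$ to be cyclic, so the statement must be read with $B$ finitely generated (as in Proposition \ref{idealsinRomega} and Lemma \ref{lemmaforidealsinRdeltastar}); I assume this throughout.

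I would first record the ambient picture. By Lemma \ref{lemma:idempotent}(4), right multiplication by $\delta$ is an isomorphism $R\delta^* \xrightarrow{\sim} Rw$ of left $R$-modules (with $w = r(\delta)$), inverse given by right multiplication by $\delta^*$; hence $B \cong N$, where $N := B\delta \leq Rw$ is finitely generated. Since $w \in W' = T(w)$ we have $\omega' w = w \omega' = w$, so $Rw \leq R\omega'$; moreover $w$ and $\omega' - w$ are orthogonal idempotents summing to $\omega'$, whence $R\omega' = Rw \oplus R(\omega' - w)$ and $Rw$ is a direct summand of $R\omega'$. (If one wishes, Lemma \ref{lemmaforidealsinRdeltastar}, which uses the hypothesis that $\omega R \omega$ is \Bez, already shows $B = Ry\delta^*$ is principal and $N = Ryw$ cyclic; the argument below produces the sharper, $\omega'$-supported generator needed for a split epimorphism out of $R\omega'$.)

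The technical heart is that compression by $\omega'$ detects $N$ faithfully and finitely; precisely, (i) $N = R\omega'N$, i.e. $N$ is generated as a left $R$-module by $\omega'N = \{\omega' n : n \in N\}$, and (ii) $\omega'N$ is a finitely generated left ideal of $S'$ contained in $S'w$. For (i) the containment $R\omega'N \leq N$ is clear, and I would prove $n \in R\omega'N$ for each $n \in N$ by induction on the maximal length of an initial segment of a constituent real path of $n$ lying outside $W'$: writing $n = \omega' n + \sum_e e\, n_e$, where $e$ runs over the first edges of the monomials with source outside $W'$ and $n_e$ collects the corresponding truncated terms, relation (CK1) gives $e^* n = n_e \in N$ of strictly smaller defect (here heredity of $W'$ guarantees that once a path enters $W'$ it stays), so each $n_e \in R\omega'N$ by induction while $\omega' n \in \omega' N$. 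For (ii) I would write $N = \sum_i R n_i$ and, again using (CK1) together with the fact that any path (and its ghost) beginning in $W'$ lies entirely in $S'$, reduce the $S'$-generators $\mu^* n_i$ of $\omega'N$ to the finitely many $\mu^* n_i$ with $\mu$ a prefix of a constituent of $n_i$, and $\gamma^* n_i$ with $\gamma$ a constituent of $n_i$.

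Granting (i) and (ii), the proof closes formally. Since $S' = L_K(E_{W'})$ is \Bez and $\omega'N$ is a finitely generated left ideal of $S'$, there is $y' \in S'w$ with $\omega'N = S'y'$; in particular $\omega' y' = y'$. By Lemma \ref{subringBezout}, applied to the unital subring $S'$ of $R$ (whose identity is $\omega'$), the equality $\omega'N = S'y'$ lifts to $R\omega'N = Ry'$, and with (i) this gives $N = Ry'$. Defining $\varphi : R\omega' \to N$ by $\varphi(r\omega') = ry'$ yields a well-defined left $R$-module epimorphism, because $\omega' y' = y'$ and $N = Ry'$. As $R$ is hereditary $N$ is projective, so $\varphi$ splits; composing with the isomorphism $N = B\delta \xrightarrow{\;\cdot\,\delta^*\;} B$ produces the desired split epimorphism $R\omega' \to B \to 0$, so that $B$ is isomorphic to a direct summand of $R\omega'$. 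I expect steps (i)–(ii) — the faithful and finite behaviour of compression into the tree corner $S'$, resting on the heredity of $W' = T(r(\delta))$ and repeated use of (CK1) — to be the main obstacle; everything after is routine.
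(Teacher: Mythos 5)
Your argument is correct in outline and reaches the paper's conclusion, but by a genuinely different route. The paper first invokes Lemma \ref{lemmaforidealsinRdeltastar} (hence Proposition \ref{idealsinRomega} and the hypothesis that $\omega R\omega$ is \Bez) to replace $B$ by $Ry\delta^*$ with a \emph{single} generator $y\in\omega R\omega'$; it then writes $y=\omega'y\omega'+\omega''y\omega'$, expands $\omega''y\omega'=\sum_i\gamma_i\,\omega'y_i\omega'$ with finitely many distinct real paths $\gamma_i$ crossing from $W''$ into $W'$, uses the orthogonality $\gamma_i^*\gamma_j=0$ to recover each $\omega'y_i\omega'=\gamma_i^*\omega''y\omega'$, and thereby exhibits a finite generating set of $Ry$ inside $\omega'R\omega'$, to which Lemma \ref{subringBezout} applies. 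You instead compress the whole module $N=B\delta\leq Rw$ directly into the corner $S'=\omega'R\omega'$, proving (i) $N=R\,\omega'N$ by stripping leading edges with source outside $W'$ via (CK2)/(CK1), and (ii) that $\omega'N$ is a finitely generated left $S'$-ideal via the prefix analysis; the same heredity/orthogonality phenomena are at work, but applied to $N$ rather than to one generator. The trade-off: the paper's detour makes the finiteness of the generating set in $\omega'R\omega'$ immediate (a single $y$ has finitely many monomials), whereas your claims (i) and (ii) are only sketched and are the technical heart --- though they do close up along the lines you indicate, the key point in (ii) being that $\mu^*n_i\neq 0$ for a ``first-entry'' path $\mu$ forces $\mu$ to be a prefix of a constituent real path of $n_i$, precisely because $W'$ is hereditary. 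What your route buys is that the hypothesis that $\omega R\omega$ is \Bez\ is never used, only that $\omega'R\omega'$ is, so your version of the proposition is formally stronger. Your remark that $B$ must be read as finitely generated is also right; the paper's proof assumes this tacitly through its appeal to Lemma \ref{lemmaforidealsinRdeltastar}.
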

\begin{proof}
By Lemma \ref{lemmaforidealsinRdeltastar} we have $B = Ry\delta^*$ for some $y\in  \omega R \omega$.  Since $\omega \delta^* =w\delta^*= \omega' \delta^*$, without loss of generality we may assume $y \in \omega R \omega'$.    We define $\omega'' =  \omega - \omega'$,  so that  $\omega''\in W'' := W\setminus W'$.  Thus  $y=\omega' y\omega'+\omega'' y\omega'$ and $Ry=R(\omega' y\omega')+R(\omega'' y\omega')$, since $\omega'y\omega'=\omega'y$ and $y-\omega'y=\omega'' y\omega'\in Ry$. 

 Since the set $W'$ is hereditary, any element of the form $w'' r w' \in w'' R w'$  must be a sum of elements of the form $k \gamma \alpha^*$, where $k\in K$ and necessarily $\gamma$ is a path starting in $W''$ and ending in $W'$.    (Specifically, no summand of $w'' r w' $ can start with a ghost path.)   
So in particular we can write  $$\omega'' y\omega' =  \gamma_1 \omega'y_1\omega'+\cdots + \gamma_\ell \omega'y_\ell \omega',$$
where the $\gamma_i$ are distinct, each $\gamma_i = \kappa_i g_i$ for some  real path (possibly a vertex) $\kappa_i$, and edge  $g_i$, for which $s(\kappa_i)\in W''$, $r(\kappa_i)= s(g_i)\in W''$, and $r(g_i) \in W'$.     So the displayed equation gives  that $R(\omega''y\omega')\leq R(\omega' y_1 \omega')+\cdots+R(\omega' y_\ell \omega')$. 


For any two paths $\lambda_1, \lambda_2$ in $E$, the expression $\lambda_1^* \lambda_2$ is nonzero only when $\lambda_1$ is an initial subpath of $\lambda_2$, or $\lambda_2$ is an initial subpath of $\lambda_1$.   But because each  $\gamma_i$ has its final edge having source in $W'' $ and range in $W'$, the only way that some $\gamma_i$ can be an initial subpath of some $\gamma_j$ is for $\gamma_i = \gamma_j$.     Thus  $\gamma_i^* \gamma_j = 0$ for $i\neq j$.   
Using this,  and multiplying the displayed equation on the left by $\gamma_i^*$, yields 
  $\gamma_i^* \omega'' y \omega'=\omega' y_i \omega'$. Thus we get that $R(\omega' y_1 \omega')+\cdots+ R(\omega' y_\ell  \omega')\leq R\omega''y\omega'$. Hence  $R(\omega' y_1 \omega')+\cdots+ R(\omega' y_\ell \omega') =  R\omega''y\omega'$.  But then 
  $$Ry=R(\omega' y\omega')+R(\omega'' y\omega')=R(\omega' y\omega')+R(\omega' y_1 \omega')+\cdots+ R(\omega' y_\ell  \omega').$$  So we conclude that  $$ Z:=    \{ \omega' y \omega'\} \cup \{ \omega' y_i \omega' \ | \ 1\leq i \leq \ell   \}$$ is a set of generators for $Ry$ contained in $\omega' R \omega'$. But $\omega' R \omega'$ is \Bez \ by assumption.    So by Lemma \ref{subringBezout} we get that $Ry = Rx$ for some 
 $x \in \omega' R \omega'$. 

   
   In particular,  the map $\varphi: R\omega' \to Rx\delta^* = B$ given by $r \omega' \mapsto r \omega' x \delta^* = r  x \delta^* $ is an epimorphism.   Since $B$ is a left ideal of $R$ it is necessarily projective.  The result follows.  
\end{proof}

The ``exotically-behaved" algebra $L_K(1,n)$ for $n\geq 2$  provides a model for the behavior of a large class of algebras, each of which will be ``easily" shown to be \Bez.  

\begin{definition}\label{UGNDef}
{\rm A ring $R$ is said to have the {\it Unbounded Generating  Number} property (more efficiently, {\it UGN}) in case for each pair of positive integers $m,n$,  the existence of an epimorphism of  left $R$-modules $R^m \to R^n \to 0$ implies  $m\geq n$.   }
\end{definition}   

\begin{lemma}\label{notUGNgivesBez}
Suppose $R$ is a ring for which there is a left $R$-module epimorphism $R \to R^2$.    (In other words, suppose $R$ fails the UGN property for the pair $m=1, n=2$.)   Then $R$ is \Bez.
\end{lemma}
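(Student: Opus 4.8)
The plan is to extract from the hypothesis a single, left--right symmetric piece of data, and then to push a surjection $R\to I$ onto an arbitrary finitely generated left ideal $I$ (and symmetrically on the right).

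First I would unwind what the epimorphism $\phi\colon R\to R^{2}$ provides. Since $R^{2}$ is free, hence projective, $\phi$ admits a section $\psi\colon R^{2}\to R$ with $\phi\psi=\mathrm{id}_{R^{2}}$. Writing $\phi(1)=(a_{1},a_{2})$ and $c_{j}=\psi(e_{j})$, the identity $\phi(c_{j})=(c_{j}a_{1},c_{j}a_{2})=e_{j}$ yields elements $a_{1},a_{2},c_{1},c_{2}\in R$ with $c_{i}a_{j}=\delta_{i,j}$ for $i,j\in\{1,2\}$; conversely any such elements build the left epimorphism $r\mapsto(ra_{1},ra_{2})$, split by $(b_{1},b_{2})\mapsto b_{1}c_{1}+b_{2}c_{2}$. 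The key observation is that the relation $c_{i}a_{j}=\delta_{i,j}$ is invariant under interchanging the two families: the map $r\mapsto(c_{1}r,c_{2}r)$ is a \emph{right} $R$-module epimorphism $R\to R^{2}$, split by $(b_{1},b_{2})\mapsto a_{1}b_{1}+a_{2}b_{2}$. Thus the hypothesis is self-dual, and it suffices to establish that every finitely generated left ideal is principal; the right-hand statement then follows verbatim.

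Next I would promote the surjection onto $R^{2}$ to a surjection onto every $R^{n}$. This is a routine induction: given an epimorphism $R\to R^{n}$, composing with $\phi\oplus\mathrm{id}_{R^{n-1}}\colon R^{n}=R\oplus R^{n-1}\to R^{2}\oplus R^{n-1}=R^{n+1}$ produces an epimorphism $R\to R^{n+1}$, so there is a left $R$-module epimorphism $\pi_{n}\colon R\to R^{n}$ for every $n\ge 1$. Now let $I=Rx_{1}+\cdots+Rx_{n}$ be an arbitrary finitely generated left ideal. The canonical map $R^{n}\to I$, $(r_{1},\dots,r_{n})\mapsto\sum_{i}r_{i}x_{i}$, is surjective by construction, and precomposing it with $\pi_{n}$ gives a left $R$-module epimorphism $\theta\colon R\to I$. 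Setting $y=\theta(1)\in I$ and using that $R$ is unital, $\theta(r)=\theta(r\cdot 1)=r\,\theta(1)=ry$, whence $I=\theta(R)=Ry$ is principal. Applying the identical argument to the right epimorphism $R\to R^{2}$ obtained above shows every finitely generated right ideal is principal, so $R$ is \Bez.

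The one genuine subtlety --- the step I would be most careful about --- is precisely the left--right symmetry invoked above. A priori the conclusion ``\Bez'' requires both one-sided ideals to be principal, while the hypothesis only speaks of left modules; it is the invariance of the relations $c_{i}a_{j}=\delta_{i,j}$ under swapping the $a$'s and $c$'s that upgrades left \Bez \ to the full two-sided statement. (In the intended application to $R=L_{K}(E)$ one could instead appeal to $L_{K}(E)\cong L_{K}(E)^{op}$, but the symmetry argument keeps the lemma self-contained for arbitrary unital $R$.) Everything else --- the splitting of a surjection onto a free module, the iteration to $R^{n}$, and reading off a single generator from an epimorphism $R\to I$ --- is formal; in particular no use is made of projectivity of $I$ or of any structure on $R$ beyond unitality, which is what makes this route cleaner and more general than the direct-summand argument used for $L_{K}(1,n)$.
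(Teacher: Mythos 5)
Your proof is correct, and its core is exactly the paper's argument: iterate the epimorphism to get $R \to R^n$ for all $n$, compose with the canonical surjection $R^n \to I$, and read off a single generator $y = \theta(1)$. The one place you go beyond the paper is the left--right symmetry. The paper's proof, as written, only establishes that finitely generated \emph{left} ideals are principal; the right-sided half of the \Bez\ property is covered by the article's standing convention (stated just before Remark \ref{Bezoutremark}) that $L_K(E) \cong L_K(E)^{op}$, so left and right \Bez\ coincide in the intended application. Your observation that the splitting data $c_i a_j = \delta_{i,j}$ is invariant under swapping the two families, and hence yields a right-module epimorphism $R \to R^2$ as well, makes the lemma genuinely self-contained for an arbitrary unital ring --- a small but real strengthening over relying on the opposite-ring isomorphism. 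Everything else (the splitting off a free module, the induction to $R^{n+1}$ via $\phi \oplus \mathrm{id}_{R^{n-1}}$, and the absence of any projectivity hypothesis on $I$) is sound.
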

\begin{proof}
Easily the given condition implies that there is a left $R$-module epimorphism $R \to R^n$ for every positive integer $n$.    If $I$ is a finitely generated left ideal of $R$, then there exists an epimorphism $R^n \to I$, and so there exists an epimorphism $R \to I$, so that $I$ is principal.  
\end{proof}

We note that for any ring $R$ satisfying the hypotheses of Lemma \ref{notUGNgivesBez}, we may indeed conclude that every finitely generated left $R$-{\it module} is principal.   

\medskip

Let $E$ be any graph, and $v\in E^0$ a non-sink.  We consider the nonempty set $s^{-1}(v)$, i.e., the set of edges emitted by $v$.    For  any nontrivial  partition $\mathcal{P}$ of $s^{-1}(v)$, one may define the {\it out-split graph of $E$ at $v$ relative to $\mathcal{P}$}, denoted  by $E_v(\mathcal{P})$.   (This definition  is  a specific case of a more general construction;  although the general construction does not concern us here, we will utilize this one specific case of it.)      In particular,  suppose $v$ is a source vertex for which $|s^{-1}(v)| = \{f_1, ..., f_n\}$ for some $n\geq 2$.   Let $\mathcal{P}$ be the partition $\{f_1, ... , f_{n-1}\} \sqcup \{f_n\}$ of $s^{-1}(v)$.   Then  $E_v(\mathcal{P})$ may be described in words as     the graph whose:    vertex set otherwise looks exactly like $E^0$,  but with one additional vertex $\hat{v}$ added;  and whose edge set otherwise  looks exactly like $E^1$, except with $f_n$  eliminated, and with a new edge $g$  added, for which $s(g) = \hat{v}$ and $r(g) = r(f_n)$.     Even less formally,  we build $E_v(\mathcal{P})$ by giving the edge $f_n$ its own new source vertex, and leaving everything else alone.   (For additional information about the out-split process, see e.g. \cite{AALP}.) 

We will denote $E_v(\mathcal{P})$ by $\Gamma$ in the sequel.  To distinguish between the two related graphs $E$ and $\Gamma$, we denote by $v_0$ the vertex $v$ when viewed in $\Gamma^0$.    

\begin{theorem}\label{outsplittheorem}  \cite[Theorem 2.8]{AALP}  In the situation described above, there is an isomorphism of $K$-algebras
$$L_K(E) \cong  L_K(E_v(\mathcal{P})).$$
\end{theorem}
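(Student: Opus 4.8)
The plan is to construct an explicit $K$-algebra isomorphism $\Phi : L_K(\Gamma) \to L_K(E)$ and verify it respects the defining relations, where $\Gamma = E_v(\mathcal{P})$ is the out-split graph obtained by splitting off the single edge $f_n$ at the source vertex $v$. First I would fix notation for the generators of $L_K(\Gamma)$: its vertices are $\{u \mid u \in E^0\} \cup \{\hat{v}\}$, and its edges are $(E^1 \setminus \{f_n\}) \cup \{g\}$, where $s(g) = \hat{v}$ and $r(g) = r(f_n)$. The intuition is that in $\Gamma$ the single source vertex $v$ of $E$ has been replaced by two source vertices $v_0$ and $\hat{v}$, with $v_0$ emitting $f_1, \dots, f_{n-1}$ and $\hat{v}$ emitting the copy $g$ of $f_n$. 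Since $v$ is a \emph{source} (so $r^{-1}(v) = \emptyset$), no edge of $E$ points into $v$, which is what makes this splitting clean: there is no incoming edge that must be redistributed between $v_0$ and $\hat{v}$, and no (CK2) relation at $v$ to reconcile (as $v$ emits edges but receives none).

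Next I would define $\Phi$ on generators by sending every vertex $u \neq v, \hat v$ to itself, every edge $e \neq f_n, g$ (and its starred partner) to itself, and then prescribing the images of $v_0, \hat{v}, g, g^*$ and the edges $f_1, \dots, f_{n-1}$ emitted by $v_0$. The natural guess is $\Phi(v_0) = \sum_{i=1}^{n-1} f_i f_i^*$ and $\Phi(\hat{v}) = f_n f_n^*$, so that $\Phi(v_0) + \Phi(\hat{v}) = \sum_{i=1}^n f_i f_i^* = v$ recovers the (CK2) relation at $v$ in $E$; correspondingly $\Phi(f_i) = f_i$ for $i < n$ and $\Phi(g) = f_n$, $\Phi(g^*) = f_n^*$. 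One checks that $\Phi(v_0)$ and $\Phi(\hat{v})$ are orthogonal idempotents in $L_K(E)$ summing to $v$, which is exactly the algebraic shadow of splitting one vertex into two. To build the inverse, I would send $v \mapsto v_0 + \hat{v}$, $f_i \mapsto f_i$ and $f_i^* \mapsto f_i^*$ for $i < n$, and $f_n \mapsto g$, $f_n^* \mapsto g^*$, fixing everything else; verifying that the two assignments are mutually inverse is then a routine generator-by-generator comparison.

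The substantive work is checking that $\Phi$ is a well-defined algebra homomorphism, i.e., that the images of the generators of $L_K(\Gamma)$ satisfy all five relations \textrm{(V)}, \textrm{(E1)}, \textrm{(E2)}, \textrm{(CK1)}, \textrm{(CK2)} holding in $L_K(E)$; by the universal property of $L_K(\Gamma)$ as a quotient of the path algebra $K\widehat{\Gamma}$, it suffices to verify these relations on images. The relations not involving $v_0, \hat{v}, f_n, g$ are immediate since $\Phi$ fixes those generators. The interesting verifications are: \textrm{(V)} the orthogonality and idempotency of $\Phi(v_0), \Phi(\hat{v})$; \textrm{(E1)/(E2)} that e.g. $\Phi(v_0)\Phi(f_i) = \Phi(f_i)$ and $\Phi(\hat{v})\Phi(g) = \Phi(g)$, which follow from $f_i^* f_j = \delta_{ij} r(f_i)$ and the (CK1) relations in $E$; \textrm{(CK1)} that $\Phi(g)^*\Phi(g) = \Phi(r(g))$ and the mixed products like $\Phi(f_i)^*\Phi(g)$ vanish; and \textrm{(CK2)} at $v_0$ and $\hat{v}$, which reduce to the idempotent expressions above and to the original (CK2) at $v$.

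The main obstacle I anticipate is purely bookkeeping: \textrm{(CK2)} is required in $L_K(\Gamma)$ only at vertices emitting finitely many (at least one) edges, so I must confirm that after splitting, the vertex $\hat{v}$ emits exactly the single edge $g$ and $v_0$ emits exactly $f_1,\dots,f_{n-1}$, and that the (CK2) relation at each of these new vertices maps correctly under $\Phi$ — in particular that $\Phi$ does not introduce or destroy any (CK2) obligation elsewhere, since every other vertex emits in $\Gamma$ precisely the edges it emitted in $E$. Once these relation checks are in place, well-definedness and bijectivity give the $K$-algebra isomorphism $L_K(E) \cong L_K(E_v(\mathcal{P}))$ asserted; as noted, this is exactly \cite[Theorem 2.8]{AALP}, so the cleanest write-up may simply cite that theorem, but the explicit $\Phi$ above records the isomorphism we will use in the sequel.
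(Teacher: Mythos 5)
Your proposal is correct, but it takes a different route from the paper: the paper offers no proof of this statement at all, simply citing \cite[Theorem 2.8]{AALP}, which establishes the general out-splitting isomorphism for arbitrary partitions at arbitrary non-sink vertices. You instead give a direct, self-contained verification specialized to the one case actually used (a source vertex $v$ with the partition $\{f_1,\dots,f_{n-1}\}\sqcup\{f_n\}$), and your explicit map $\Phi(v_0)=\sum_{i<n}f_if_i^*$, $\Phi(\hat v)=f_nf_n^*$, $\Phi(g)=f_n$ does check out: the images are orthogonal idempotents summing to $v$, all five relations of $L_K(\Gamma)$ are satisfied by the images, and the displayed inverse assignment $v\mapsto v_0+\hat v$, $f_n\mapsto g$ respects the relations of $L_K(E)$ precisely because (CK2) at $v$ becomes the sum of (CK2) at $v_0$ and at $\hat v$. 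You also correctly identify why the source hypothesis matters --- in the general out-split of \cite{AALP}, each edge $e$ with $r(e)=v$ must be duplicated into one copy per partition class, and it is exactly this complication that vanishes when $r^{-1}(v)=\emptyset$, so your special case is genuinely easier than the cited theorem. One small wording slip: your parenthetical ``no (CK2) relation at $v$ to reconcile'' is misleading, since $v$ emits $n\geq 2$ edges and so (CK2) does hold at $v$ in $L_K(E)$; indeed you use that very relation twice (to see $\Phi(v_0)+\Phi(\hat v)=v$ and to verify $\Phi\circ\Psi$ fixes $v$). What the source hypothesis actually buys is the absence of incoming edges to redistribute and the fact that no \emph{other} vertex has its (CK2) relation altered; the rest of your write-up handles this correctly, so this is a presentational blemish rather than a gap. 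Either approach is acceptable here; yours has the advantage of recording the explicit isomorphism used later in the proof of Theorem \ref{LpasareBezout}, while the paper's citation covers the general out-split construction without extra work.
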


We are now in position to prove our main result.

\begin{theorem}\label{LpasareBezout}
Let $E$ be a finite graph and $K$ any field.  Then $L_K(E)$ is \Bez.  
\end{theorem}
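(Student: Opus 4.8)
The plan is to prove the statement by induction and to feed the result back through the corner rings $\omega R\omega=L_K(E_W)$ and $\omega' R\omega'=L_K(E_{W'})$ on which Propositions~\ref{idealsinRomega} and~\ref{idealsinRdeltastar} depend. Writing $R=L_K(E)$ and recalling $R\cong R^{op}$, it suffices to treat finitely generated left ideals. I would induct on the pair $(|E^0|,\operatorname{exc}(E))$ in the lexicographic order, where $\operatorname{exc}(E)=\sum_v(|s^{-1}(v)|-1)$ is summed over the source vertices $v$ of $E$ emitting at least one edge. First I would clear away the ``infinite'' graphs: if there is an epimorphism ${}_RR\to R^2$ then Lemma~\ref{notUGNgivesBez} already yields that $R$ is \Bez, so I may assume $R$ has the UGN property. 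The role of this reduction is that a finite graph possessing \emph{no} source (neither a source vertex nor a source cycle) must have every terminal strongly connected component of its condensation ``fat'', forcing in the graph monoid a relation $2[u]\le[u]$ at such a component; since $[R]$ is dominated by the sum of these idempotent-like classes, one gets $2[R]\le[R]$, i.e.\ an epimorphism ${}_RR\to R^2$. Hence once UGN is assumed, $E$ carries a source $V$, and the induction can run.

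For the inductive step I fix a source $V$ and use that $W,W'\subsetneq E^0$, so that $L_K(E_W)=\omega R\omega$ and $L_K(E_{W'})=\omega'R\omega'$ have strictly smaller value of $(|E^0|,\operatorname{exc})$ and are \Bez\ by the inductive hypothesis; this is exactly what Propositions~\ref{idealsinRomega} and~\ref{idealsinRdeltastar} require. If $V=\{v\}$ is an isolated source vertex then $v$ is central, $R\cong K\times\omega R\omega$, and a finite product of \Bez\ rings is \Bez. If $V=\{v\}$ is a source vertex emitting a single edge $f$, then $v=ff^*$ by (CK2) and $R\nu=Rf^*=R\delta^*$, so for a finitely generated left ideal $I$ I would split $R=R\nu\oplus R\omega$ and correspondingly $I=I\nu\oplus I\omega$ (each a finitely generated left ideal, being the image of $I$ under right multiplication by the orthogonal idempotents $\nu,\omega$). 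Writing $A\mid B$ for ``$A$ is isomorphic to a direct summand of $B$'', Proposition~\ref{idealsinRomega} gives a split epimorphism $R\omega\to I\omega$, so $I\omega\mid R\omega$, and Proposition~\ref{idealsinRdeltastar} gives $I\nu\mid R\omega'$; since $R\omega'\mid R\omega\mid{}_RR$, transitivity yields $I\nu\mid{}_RR$, and then the modular law applied to $I\nu\le R\nu$ upgrades this to $I\nu\mid R\nu$. Combining $I\omega\mid R\omega$ with $I\nu\mid R\nu$ across $R=R\omega\oplus R\nu$ exhibits $I$ as a direct summand of ${}_RR$, hence $I=Re$ for an idempotent $e$ and $I$ is principal.

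The source vertex that emits two or more edges is handled by the out-split construction, which is why Theorem~\ref{outsplittheorem} is needed. Given such a $v$, that theorem gives $L_K(E)\cong L_K(\Gamma)$ with $\Gamma=E_v(\mathcal P)$, where one edge of $v$ is peeled off to a new single-edge source vertex $\hat v$. Deleting $\hat v$ from $\Gamma$ returns $E$ with one edge at $v$ removed, so the corner $L_K(\Gamma_{\hat W})$ obtained by eliminating $\hat v$ has the same number of vertices as $E$ but strictly smaller $\operatorname{exc}$, while the tree-corner $L_K(\Gamma_{W'})$ has fewer vertices; both lie strictly below $E$ lexicographically and so are \Bez\ by induction. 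I would then run the single-edge-source argument of the previous paragraph \emph{inside} $\Gamma$ at $\hat v$ --- that argument consults only the corner rings of $\Gamma$, never the (larger) graph $\Gamma$ itself --- to conclude that $L_K(\Gamma)\cong L_K(E)$ is \Bez.

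I expect the real obstacle to be the source-cycle case, where $J\ne R$ and the quotient is the \emph{nontrivial} ring $R/J\cong M_n(K[x,x^{-1}])$ (a principal ideal ring, by Remark~\ref{Bezoutremark}). Here $R\nu=\bigoplus_i Rv_i$ is no longer a single $R\delta^*$: by Lemma~\ref{Jasleftideal} it meets $J$ in $\bigoplus_{f,\epsilon}Rf^*\epsilon^*$, and it surjects onto the regular module of $R/J$. The clean ``$I\nu\mid R\nu$'' mechanism of the single-edge case breaks down, because a finitely generated ideal of $K[x,x^{-1}]$ can be principal without being a direct summand (e.g.\ $(x-1)$), so the image $\bar I\le R/J$ need only be principal, not split off. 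The crux is therefore to principalize $\bar I$ using that $R/J$ is a principal ideal ring, lift a generator, and reconcile it with the portion of $I$ lying in $J$ --- which is controlled summand-by-summand through Proposition~\ref{idealsinRdeltastar} applied to the $Rf^*\epsilon^*$ --- producing a single generator for $I$ while staying inside the finitely generated category. Assembling these principal pieces into one generator, rather than merely principal constituents, is the heart of the theorem.
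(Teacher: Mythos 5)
Your overall architecture matches the paper's --- induction on $|E^0|$, the trichotomy (no source / source vertex / source cycle), UGN-failure disposing of the sourceless case via Lemma~\ref{notUGNgivesBez}, out-splitting to reduce a multi-edge source vertex to a single-edge one, and Propositions~\ref{idealsinRomega} and~\ref{idealsinRdeltastar} doing the local work --- but two steps are genuinely broken or missing. First, in the single-edge source case your decomposition $I=I\nu\oplus I\omega$ is false: $I$ is a \emph{left} ideal, so $x\in I$ does not give $x\nu\in I$, and already in $M_2(K)$ (the Leavitt path algebra of $v\to w$) the left ideal $R(e_{11}+e_{12})$ is two-dimensional while $I\nu\oplus I\omega=Re_{11}\oplus Re_{22}=R$. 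The correct statement is $I\cong(I\cap R\omega)\oplus B$, where $B$ is the projection of $I$ into $R\nu$ along $R\omega$, split off using hereditariness. Worse, your ``modular law'' upgrade from ``$I\nu$ is isomorphic to a direct summand of $R$'' to ``$I\nu$ is a direct summand of $R\nu$'' is invalid: the modular law needs $I\nu$ to be an \emph{actual} summand of $R$, and being isomorphic to one does not suffice (in $K[x,x^{-1}]$ the ideal $(x-1)$ is isomorphic to the whole ring yet is not a summand). The paper closes this gap differently: it writes $I$ as a summand of $Rw\oplus R\widehat{\omega}\oplus Ry$ with $y\in\omega' R\omega'$ normalized so that $y=yw$, observes that $R\widehat{\omega}\oplus Ry$ is then an honest left ideal contained in $R\omega$, applies the \Bez\ property of $\omega R\omega$ together with Proposition~\ref{idealsinRomega} once more to make that piece a summand of $R\omega$, and finally uses $Rw\cong Rv=R\nu$ to reassemble $R$. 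Some such absorption argument is unavoidable; the abstract chain $I\nu\mid R\omega'\mid R$ does not by itself place $I\nu$ inside $R\nu$ as a summand. (A smaller point: your invariant $(|E^0|,\operatorname{exc})$ need not decrease when the edge $f_m$ is deleted, since $r(f_m)$ may become a new source vertex of large out-degree; the paper instead inducts on $|s^{-1}(v)|$ for the chosen source vertex.)

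Second, and more seriously, the source-cycle case is not proved: you correctly diagnose that $R/J\cong M_n(K[x,x^{-1}])$ only yields a principal, not split, image, and that the real problem is fusing a lifted generator with the part of $I$ lying in $J$ --- but you then declare that fusion to be ``the heart of the theorem'' without supplying it. The paper's proof of this case contains the two devices you would need. (i) To show finitely generated ideals \emph{inside} $J$ are principal, it inducts on the number of summands of $J_N$ from Lemma~\ref{Jasleftideal} and, crucially, proves $R\nu\cong R\nu\oplus N^{s\ell}$ with $N=\oplus_{f\in\Delta(V)}Rff^*$ by walking the (CK2) relation around the cycle; this is what absorbs the extra copies of $R\omega'$ produced by Proposition~\ref{idealsinRdeltastar} back into $R\nu$. (ii) To fuse the two pieces of a general $I$, it introduces the elements $d_t=\sum_i\alpha_i c^t\alpha_i^*$ and $C^N=\sum_k c_k^N$, which replace a generator $y$ of the image in $R/J$ by $C^N\nu y$ without changing the ideal generated (since $(C^*)^NC^N=\nu$), while $N$ is chosen so large that $(C^*)^N$ annihilates the $J$-part $t$; orthogonality then gives $R\nu y+Rt=R(C^N\nu y+t)$, a single generator. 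Without these (or equivalent) constructions the argument does not close.
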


\begin{proof}
We proceed by induction on $|E^0|$, the number of vertices in $E$.   

\medskip

If $|E^0| = 1,$ then there are three possibilities:  either $E$ has no edges, exactly one edge, or $m\geq 2$  edges.    The Leavitt path algebras of such graphs are then, respectively,  $K$, $K[x,x^{-1}]$, and $L_K(1,m)$.   But as established in the introduction, each of these $K$-algebras is \Bez, thus establishing the base case.

\medskip

So we assume that $|E^0| = n,$  and that $L_K(F)$ is \Bez \ for any graph $F$ having fewer than $n$ vertices.  We show that $L_K(E)$ is \Bez.    As in the $|E^0| = 1$ case, there are three possibilities to analyze.

\bigskip

\underline{ Case 1:  $E$ contains neither source vertices nor source cycles.}  

\smallskip

\ \ \ \ \  By \cite[Theorem 3.9]{ANP}, $L_K(E)$ does not have Unbounded Generating Number.   By \cite[Remark 3.10]{ANP}, the failure of UGN for a Leavitt path algebra implies that there is an epimorphism of left $L_K(E)$-modules $L_K(E) \to L_K(E)^2$.    So by Lemma \ref{notUGNgivesBez} $L_K(E)$ is \Bez,  thus establishing Case 1.

\bigskip

\underline{ Case 2:  $E$ contains a source vertex.}  

\smallskip

\ \ \ \ \  Let $v$ denote a source vertex in $E$.         If $v$ is also a sink (i.e., if $v$ is an isolated vertex), then it is easy to establish that  $L_K(E)$ is isomorphic as $K$-algebras to the ring direct sum 
$ K \oplus L_K(E \setminus \{v\})$.  
 Clearly $K$ is \Bez,  and $L_K(E \setminus \{v\})$ is \Bez \ by the induction hypothesis, so that $L_K(E)$ is a direct sum of \Bez  \ rings, and hence is \Bez.  

 We now consider the case where $|s^{-1}(v)| = 1$, i.e., that $v$ emits a single edge, which we denote by $f$.   (We will subsequently analyze the general case.)     Let $w$ denote $r(f)$.   

So let $I \leq L_K(E)$ be a finitely generated left ideal.  Then $I \leq L_K(E)=Rv \oplus R\omega$.  Note that $Rv = Rf^*$,  using   $s^{-1}(v) = \{f\}$  (which yields  $ff^* = v$ by the CK2 relation), and using $f^*ff^* = f^*$.    This yields in particular that $Rv \cong Rw$, by Lemma \ref{lemma:idempotent}(4).  

Then projecting $I$ into $Rf^*$ along $R\omega$  gives that  $I / I \cap R\omega$ is isomorphic to a (finitely generated) submodule of $Rf^* = Rv$, so that $I / I \cap R\omega$ is isomorphic to a left ideal of $R$ contained in $Rf^*$; we denote this left ideal of $R$ by $B$.    As $R$ is hereditary, this gives in particular  that $I / I \cap R\omega$ is projective.    Now consider the exact sequence
$$0 \to I \cap R\omega \to I  \to   I / I \cap R\omega  \to  0.$$
Since $I / I \cap R\omega$ is projective, the sequence splits, and we have 
$$ I \cong (I \cap R\omega) \oplus I / I \cap R\omega \cong (I \cap R\omega) \oplus B.$$


 

\noindent 
   Let $W'$ denote $T(w)$, and $\omega'$ the sum over all vertices in $W'$.   
   By the induction hypothesis, $\omega R\omega$ and $\omega' R \omega'$ are \Bez.   So $B$  satisfies the conditions of Proposition \ref{idealsinRdeltastar}, which allows us to write $B =   Ryf^*$ for some $y\in \omega' R \omega'.$  So we have
 $$ I \cong (I \cap  R\omega) \oplus Ryf^*.$$
 Since $wf^* = f^*$, we have $Ryf^* = Rywf^*$, so we may assume without loss that $yw = y$.   As well, since $y \in \omega' R \omega'$, we have $vy = 0$.   
 
 By Lemma \ref{lemma:idempotent}(4), $Ryf^* \cong Ryw = Ry$.

 Now we analyze the summand $I \cap R\omega$.   Since $I$ is finitely generated, and $I \cap R\omega$ is a direct summand of $I$, then $I\cap R\omega$ is finitely generated as well.  Since we also have $I\cap R\omega \leq R\omega$, and $\omega R \omega$ is \Bez \ by the induction hypothesis, Proposition \ref{idealsinRomega} applies to $N:= I \cap R\omega$ to yield that $N$ is isomorphic to a direct summand of $R\omega$.    We write $\widehat{W} = W \setminus \{w\}$, and let $\widehat{\omega} = \omega - w$.   Then $R\omega = Rw \oplus R\widehat{\omega}$.   So we have 
$$ I \cong  N \oplus Ryf^* \cong N\oplus Ry, \ \mbox{which is isomorphic to a direct summand of } (Rw \oplus R\widehat{\omega}) \oplus Ry.$$
 
 Now consider the left {\it ideal} $R\widehat{\omega} \oplus Ry$  (this is a subideal of $R\omega$, since $y=yw$.)  Let $S$ denote $\omega R \omega$.   We have $y = \omega' y w \in S$, and $\widehat{\omega} \in S$.  Note that $Sy \cap S\widehat{\omega} = 0$, so that $Sy \oplus S\widehat{\omega}$ is a (finitely generated) left ideal of $S$.   But $S$ is \Bez \ by the induction hypothesis, so   $Sy \oplus S\widehat{\omega} = Sx$ for some $x\in S$.   Then Lemma \ref{subringBezout}  
  yields that $Ry \oplus R\widehat{\omega} = Rx$.    But $Rx \leq R\omega$.  So by Proposition \ref{idealsinRomega}, $Rx$ is isomorphic to a direct summand of $R\omega$, so that  $Ry \oplus R\widehat{\omega}$ is isomorphic to a direct summand of $R\omega$.   So by the previous display,  $I$ is isomorphic to a direct summand of $Rw \oplus (R\widehat{\omega} \oplus Ry)$, which is then isomorphic to a direct summand of $Rw \oplus R\omega$, which as noted previously is isomorphic to $Rv \oplus R\omega$, which is precisely $R$ itself.  So $I$ is isomorphic to a direct summand of $R$, and hence is principal.   
 
 So we have shown:  If $E$ has the property that $L_K(F)$ is \Bez \ for any graph $F$ having fewer vertices than $E$, and $E$ contains a source vertex $v$ for which $|s^{-1}(v)| = 1$, then $L_K(E)$ is \Bez.

 \smallskip

 We now consider the general case, in which we assume that $E$ has the property that $L_K(F)$ is \Bez \ for any graph $F$ having fewer vertices than $E$, and $E$ contains a source vertex $v$ for which $|s^{-1}(v)| = m$ for some positive integer $m$.   We seek to show that $L_K(E)$ is \Bez.    We proceed  by induction on $m$.  The $m=1$ case is precisely the case treated in the previous discussion.       So we assume that if  $G$ is any graph which has the property that $L_K(F)$ is \Bez \ for any graph $F$ having fewer vertices than $G$, and $G$ contains a source vertex $v$ for which $|s^{-1}(v)| = m-1$, then $L_K(G)$ is \Bez.  We show that the result passes to the graph $E$.  
 We denote the set $s^{-1}(v)$  by $ \{f_1,  ..., f_{m-1}, f_m\}$.  
 
 We construct the out-split graph $\Gamma =   E_v(\mathcal{P})$ for the partition $ \mathcal{P} = \{f_1,  ..., f_{m-1}\} \sqcup \{f_m\}$ of $s^{-1}(v)$.    So (using the description given prior to Theorem \ref{outsplittheorem}), the vertex $v_0$ of $\Gamma$ is a source vertex, with $|s^{-1}(v_0) |= m-1$.    Moreover, the ``new" vertex $\hat{v}$ of $\Gamma$ is a source vertex with $|s^{-1}(\hat{v})| = 1$; we write $s^{-1}(\hat{v}) = \{g\}$.    Note that $|\Gamma^0| = |E^0| + 1$.   
 
  Using the isomorphism provided by Theorem \ref{outsplittheorem}, it suffices to show that $L_K(\Gamma)$ is \Bez.   We define the graph $\overline{\Gamma}$ to be the graph $\Gamma$, with the vertex $\hat{v}$ and edge $g$ eliminated.   So $|\overline{\Gamma}^0| = |E^0|$ and $|s^{-1}(v_0) |<m$.   Then $\overline{\Gamma}$ satisfies the appropriate criteria given in the previous two paragraphs, so we conclude that   $L_K(\overline{\Gamma})$ is \Bez.     
  
  
  
  
 So  $\Gamma$ is a graph which contains a source vertex $\hat{v}$ which emits a single edge $g$.   We re-examine the proof of the $n=1$ case above, but replacing $v$ and $f$ by the pair $\hat{v}$ and $g$.     We note that  $T(\hat{v}) \subseteq T(v)$  and $T(r(f))=T(r(g))$, so that at the point in the previous proof where we construct $\omega'$, we may still in the new proof conclude that $\omega' L_K(\Gamma) \omega'$ is \Bez \  (since $|W'|\leq |W|<|E^0|$).    On the other hand,    at the point in the previous proof where we use that $\omega L_K(E) \omega$ is \Bez \ by the induction hypothesis,  we now have that $   \omega L_K(\Gamma) \omega \cong L_K(\overline{\Gamma})$ is \Bez \ by the argument in the previous paragraph.      So proceeding as in the $n=1$ case, we conclude that $L_K(\Gamma) \cong L_K(E)$ is \Bez, as desired. 
 
 \bigskip

\underline{ Case 3:  $E$ contains a source cycle.}  

\smallskip

 \ \ \ \ \  The third and final case in the induction  involves an analysis of the situation in which $E$  contains a source cycle.    Recall that we have denoted the cycle by $c$, and the set of vertices $c^0$ by $V$, and the set of vertices $E^0 \setminus V$ by $W$.  As in Case~2, if $c$ is an isolated cycle, then $L_K(E)$ is isomorphic as $K$-algebra to the ring direct sum $L_K(E_V)\oplus L_K(E_W)$.   The ring $L_K(E_V)$ is \Bez , since it is isomorphic to $M_n(K[x, x^{-1}])$ with $n$ the length of the cycle (see Remark~\ref{Bezoutremark} together with \cite[Lemma 2.7.1]{TheBook}), and $L_K(E_W)$ is \Bez \  by the induction hypothesis. So $L_K(E)$ is \Bez.  
 
 If $c$ is not isolated, then the two-sided ideal $ \langle W \rangle$ of $L_K(E)$ is denoted by $J$.   By Lemma \ref{Jasleftideal}, $$J=R \omega  \oplus (\oplus_{f\in \Delta(V); \epsilon \in \Theta(V) }Rf^*\epsilon^*).$$    Since $c$ is a cycle, the set $\Theta(V)$ is infinite (for instance, it contains $c^n$ for each $n\in \N$).   
 
  We will use a two-step process to show that $L_K(E)$ is \Bez \ in this situation as well.     First, we establish that any finitely generated left ideal $I$ of $L_K(E)$ which is contained in $J$ is principal.   We then parlay this first step result to show that every  finitely generated left ideal of $L_K(E)$ is indeed principal.

For any $N\geq 0$, we define $\Theta_N(V)\subseteq \Theta(V)$ to be the (finite) subset of $\Theta (V)$ consisting of those paths of the form $\alpha c^i \beta$, where $i \leq N$, and neither $\alpha$ nor $\beta$ contain $c$ as a subpath.  
We then set 
$$J_{N}:=R \omega  \oplus (\oplus_{f\in \Delta(V); \epsilon\in \Theta_N(V)}Rf^*\epsilon^*).$$
Since $I \leq J$ is finitely generated, there necessarily exists $N \geq 0$ for which $I \leq J_N$.    This observations allows us to prove  that $I$ is principal by  using  induction on the number of summands which appear in the above display.  Call this number $m$.     

The $m=1$ case is established as follows.    By the induction hypothesis $\omega R \omega$ is \Bez.  If $I\leq R\omega$, then Proposition \ref{idealsinRomega} can be invoked.  On the other hand, if $I \leq Rf^*\epsilon^*$ for some $f$ and $\epsilon$, then Lemma \ref{lemmaforidealsinRdeltastar} with $\delta=\epsilon f$ finishes the job.

So we assume that  if $I  \leq A_1 \oplus A_2 \oplus \cdots \oplus A_{m-1}$  (where each $A_t$ is of the form $Rf^*\epsilon^*$, and possibly  $A_t = R\omega$ for some (unique) $t$), then $I$ is principal.    We show that if 
   $I  \leq A_1 \oplus A_2 \oplus \cdots \oplus A_m$, where each $A_t$ is of the form $Rf^*\epsilon^*$, and possibly  $A_t = R\omega$ for some (unique) $t$, then $I$ is principal as well.         If $I \cap A_t = \{0\}$ for some $t$  then $I$ is isomorphic (via  the projection along $A_t$)  to an ideal which is contained in a direct sum of $m-1$ $A_i's$,  and so we are done by induction.   Hence we may assume that $I\cap A_t \neq \{0\}$ for all $1\leq t \leq m$.     So in particular we have $I \cap A_j \neq \{0\}$ for some $A_j = Rf^* \epsilon^*$.

 Then $I / I \cap A_j \cong (I + A_j )/ A_j \leq A_1 \oplus \cdots \oplus A_{j-1} \oplus A_{j+1} \oplus \cdots \oplus A_m$.  So by induction we get $I / I \cap A_j$ is isomorphic to a principal left ideal; that is, since $R$ is hereditary and hence any left ideal is projective, $I / I \cap A_j$ is isomorphic to a direct summand of $R = R\nu \oplus R\omega$.

 
 
 Moreover, using again that  $I / I \cap A_j$ is projective,   the short exact sequence
 $$0 \to I \cap A_j \to I \to I / I\cap A_j \to 0$$
 splits, and so
 $$I \cong (I \cap A_j) \oplus I / I \cap A_j.$$
 So, as a direct summand of $I$,  in particular we get that $B:= I \cap A_j$ is both finitely generated and projective.  Let $W'$ denote $T(r(f))$, and let $\omega'$ denote the sum of the vertices in $W'$.  Then $\omega' R \omega'$ is \Bez \ by the induction hypothesis.    So we may apply Proposition \ref{idealsinRdeltastar} to get that $I \cap A_j$ is principal, and is isomorphic to a direct summand of $R\omega'$.   
 
 So we have 
  $$I \cong (I \cap A_j) \oplus I / I \cap A_j, \ \mbox{which is isomorphic to a direct summand of } R\omega' \oplus R\nu \oplus R\omega.$$

Now consider the cycle $c $ for which $c^0 = V$.   Denote $V$ by $v_1, v_2, \dots, v_\ell$, and the edges of $c$ by $h_1, h_2, ..., h_\ell$, with $s(h_i) = v_i$ and $r(h_i) = v_{i+1}$.  By the CK2 relation we have $v_1 = h_1 h_1^* + \sum_{f\in \Delta(V), s(f) = v_1} ff^*$.  So $Rv_1 \cong  Rh_1h_1^*  \oplus (\oplus_{f\in \Delta(V), s(f) = v_1} Rff^*)$.   By Lemma \ref{lemma:idempotent} parts (3) and (4)  we have $Rv_2 \cong Rh_1h_1^*$, so that $Rv_1 \cong Rv_2  \oplus (\oplus_{f\in \Delta(V), s(f) = v_1} Rff^* )$.  Using this idea at each subsequent vertex of the cycle, we eventually return back to $v_1$, and we get 
$$Rv_1 \cong Rv_1 \oplus (\oplus_{f\in \Delta(V)}Rff^*).$$
We denote $\oplus_{f\in \Delta(V)}Rff^*$ by $N$.   So we have shown that $Rv_1 \cong Rv_1 \oplus N$.   But by repeated substitution, this then gives $$Rv_1 \cong Rv_1 \oplus N^s$$ for any positive integer $s$.  

We use this same idea to establish that $Rv_i \cong Rv_i \oplus N^s$ for all $1\leq i \leq \ell$ and all positive integers $s$.   So  $$\oplus_{i=1}^\ell Rv_i \cong (\oplus_{i=1}^\ell Rv_i) \oplus N^{s\ell}$$
     for any positive integer $s$; rephrased,
     $$R\nu \cong R\nu \oplus N^{s\ell} \ \mbox{for any positive integer } s.$$

    Now consider any vertex $u\in T(V) \setminus V = W'$.   Then there is a path from $V$ to $u$, which necessarily must have as its initial edge an element $f\in \Delta(V)$.  So using the same argument as in the previous paragraphs, by the CK2 relations we have that, for each $u\in W'$,  $Ru$ is isomorphic to a direct summand of $Rff^*$ for some $f\in \Delta(V)$.  Let $p = |W'|$.   Then $\oplus_{u\in W'}Ru$ is isomorphic to a direct summand of $N^p$; that is, $R\omega' $ is isomorphic to a direct summand of $N^p$.  So 
    $R\omega' \oplus R\omega'$ is isomorphic to a direct summand of $N^{2p}$.  So 
    $R\nu \oplus R\omega' \oplus R\omega' $ is isomorphic to a direct summand of  
    $R\nu \oplus N^{2p}$, which is isomorphic to a direct summand of $R\nu \oplus N^{s\ell}$ for some $s$ sufficiently large.   But $R\nu \oplus N^{s\ell}$  in turn is isomorphic to  $R\nu$, by the previous display.   In other words,
    $$R\nu \oplus R\omega' \oplus R\omega' \ \mbox{is isomorphic to a direct summand of } R\nu.$$
    

    
       Now write $1_R = \nu + \omega = \nu +  \omega' + \omega''$.   Then as left-modules we have $I $ is isomorphic to a direct summand of $R\omega' \oplus R\omega  \oplus  R\nu $, so $I$ is isomorphic to a direct summand of $R\omega' \oplus R\omega' \oplus R\omega'' \oplus R\nu \cong R\omega' \oplus R\omega' \oplus R\nu \oplus R\omega'' $, which by the previous display gives that $I$ is isomorphic to a direct summand of $R\nu \oplus R\omega''$.   But this last module is indeed a left ideal (a direct summand of $R$), so that $I$ is isomorphic to a direct summand of $R$, i.e., $I$ is principal.
       
      \smallskip
     
 So we have established the first step of Case 3, namely,  that any finitely generated  left ideal $I$ which is contained in $J$ is principal.  
 
 \smallskip

  Next, we    establish the result for finitely generated left ideals $I$ which  contain $J$.    
  To do so, we  use some general results about Leavitt path algebras, see e.g. \cite{TheBook}.    We have that $L_K(E) / J \cong L_K( E/ H(W) )$, where $H(W)$ is the hereditary \emph{ saturated closure} of $W$ in $E$, and  $E/H(W)$ is the graph gotten from $E$ by eliminating all the vertices in $H(W)$ and all attendant edges.

      In particular, if $V$ is the set of vertices of a source cycle, then $L_K(E) / J \cong L_K(V)$, which it turns out to be isomorphic to $ {\rm M}_n(K[x,x^{-1}])$ as $K$-algebras. Let as usual $c=e_1\dots e_n$ be the source cycle based on $v_1$. Then    specifically, the isomorphism is given by
$$(p_{ij}(x)) \in  {\rm M}_n(K[x,x^{-1}]) \ \mapsto \ \sum_{i,j = 1}^n \alpha_i p_{ij}(c) \alpha_j^\ast$$
where $\alpha_i=e_i\cdots e_n$.
But ${\rm M}_n(K[x,x^{-1}])$ is a principal ideal ring (a proof is given in \cite[Theorem 40]{J}).       Specifically, the  left ideal $I / J$ of $L_K(E) / J$ is principal;  say $I / J = (L_K(E)/J)\overline{y}$, so that $I = L_K(E)y + J$ for some $$y = \sum_{i,j = 1}^n \alpha_i p_{ij}(c) \alpha_j^\ast \in R.$$  

\noindent
   For any $t \in \N$ let $d_t \in R$ denote the element $$d_t = \sum_{i=1}^n \alpha_i c^t \alpha_i^\ast.$$ Then a straightforward computation yields that $$d_t y =   \sum_{i,j = 1}^n \alpha_i c^t p_{ij}(c) \alpha_j^\ast.$$   Moreover, as left $R$-ideals,   $Rd_t y = Ry$, because (again by an easy computation) we have $d_t^\ast d_t y= y$. Indeed, $d_t^\ast d_t =\nu$ and $\nu y=y$.    So, if $y_t=d_ty$, we may write $I = L_K(E)y_t +J$, where $d_t d_t^* y_t = y_t$.
   
   
   \medskip
   
 Recall that   for each positive integer $M$ we have denoted by   $J_{M}$  the left $R$-ideal
$$J_{M} = R\omega \oplus (\oplus_{f\in \Delta(V); \epsilon \in \Theta_M}R \epsilon ff^* \epsilon^*)$$
$$ = R\omega \oplus (\oplus_{f\in \Delta(V); \epsilon \in \Theta_M }Rf^*\epsilon^*).$$
Let $e_{M}$ denote the idempotent   
$$e_{M } \ = \ \omega  + \sum_{f\in \Delta(V); \epsilon \in \Theta_M} \epsilon ff^* \epsilon^*.$$  As the set of summands appearing in $e_{M}$ is a set of orthogonal idempotents in $R$,  $e_{ M}$  itself is an idempotent. Clearly  $e_{ M}\in J_M$ and $J_{M}= L_K(E)e_{M}$.  
 
Since $I$ is finitely generated as a left $R$-module, we have that there exists $M$ for which $I \subseteq L_K(E)y_M + J_{M-1}$.     But as $L_K(E)y_M + J_{M-1} \subseteq L_K(E)y_M + J=I$, we have $$I = L_K(E)y_M + J_{M-1}=L_K(E)y_M + L_K(E)e_{M-1}.$$ 
Remember that  $d_{M} d_{M}^* y_M = y_M$;
on the other hand, 
\[
\begin{aligned}
d_{M}^*  e_{M-1} &=(\sum_i\alpha_i (c^*)^M\alpha_i^*)(\omega+\sum_{f\in \Delta(V); \epsilon\in\Theta_{M-1}}\epsilon f f^*\epsilon^*)\\
&=(\sum_i\alpha_i (c^*)^M\alpha_i^*)(\sum_{f\in \Delta(V); \epsilon\in\Theta_{M-1}}\epsilon f f^*\epsilon^*);
\end{aligned}\]
now by an easy computation we get $(c^*)^M\alpha_i^*\epsilon f =0$ for each $i$, $\epsilon$ and $f$, since $M-1$ is the maximal power of $c$ in any sequence $\epsilon=v_i\cdots c^j\cdots v_k\in\Theta_{M-1}$.

We finally claim that $L_K(E)y_M +L_K(E)e_{M-1} = L_K(E)(y_M+e_{M-1})$.  The inclusion  $\supseteq$ is clear. But $d_{M} d_{M}^\ast (y_M+ e_{M-1}) = y_M +0 =  y_M$ by the above computation, so $y_M\in L_K (E)(y_M+e_{M-1})$ and hence also $e_{M-1}$ belongs to $L_K (E)(y_M+e_{M-1})$.

 So we have established the second step of Case 3, namely,  that any finitely generated  left ideal $I$ of $R$ which is contained in $J$ is principal.   
 
 \smallskip

 We now use the two previously established steps   to verify  the result in the situation  where $I$ is not contained in $J$.  This will then complete the  proof of Case 3.

Since $I/I\cap J\cong (I+J)/J$ is a (finitely generated) ideal of the principal ideal ring $L_K(E)/J \cong M_n(K[x,x^{-1}])$,
%
 then  $I/I\cap J\cong (I+J)/J$ is cyclic as well, generated by an element $\overline{y}$, $y\in I$. So $I=Ry+I\cap J$: notice that $y=\nu y+\omega y$, where $\nu y\in I$ and $\omega y\in I\cap J$. And $I\leq R\nu y+R\omega y+I\cap J=R\nu y+I\cap J\leq I$ since $\nu y\in I$. Hence $I=R\nu y+I\cap J$.

 Notice that any element  $s\in J$ belongs to $Re_{M}$, for a suitable idempotent $e_{M}$ as defined before, so it is of the form $s=re_{M}=(\Sigma_i \gamma_i{\delta_i}^*){e_M}$.  We claim that there exists a suitable $N_k$ such that $(c_k^*)^{N_k}s=0$,  for any $k=1,\cdots, n$, where $c_k$ denotes the cycle based in $v_k$. It is not restrictive to assume $r$ to be a monomial $\gamma\delta^*$ and so consider the product $\gamma\delta^*e_M$ . First notice that if ${c_k^*}^t\gamma\neq 0$ for any $t\geq 0$, then it has to be $\gamma=c_k^{h}e_{k}\cdots e_{k'}$ for $h\geq 0$ and suitable $ k'$. And in such a case, since $V$ is a source,  it has to be $\delta\in \Theta$ and in particular $\delta^*=e_{k'}^* e^*_{k'-1}\cdots$. Distributing  $\gamma\delta^*$ over the summands of $e_M$ we get expressions $c_k^{h}e_{k}\cdots e_{k'}e_{k'}^*\cdots \epsilon f\cdots$, each of those involving $f\in \Delta(V)$ (or $\omega)$. So we can choose  a suitable $N_k$ such that $(c_k^*)^{N_k}\gamma\delta^*e_{M}=0$.

 Let now $x_j=s_j\nu y+t_j$, for $j=1\dots n$, be  a set of generators for $I$ with $t_j\in I\cap J$. Consider $M=Rt_1+\dots +Rt_n$. Then $M$ is a finitely generated ideal in $I\cap J$ and in particular it is contained in $J$. So, 
 now invoking  the previously established first step of Case 3, we have that $M$ is principal,   generated by an element $t\in J$.  Choose an $N$ such that ${ c_k^*}^Nt=0$, so that   $c_k^N{c_k^*}^Nt=0$, for any $k$. Call  $C^N=\Sigma_k (c_k^N)$ and ${C^*}^N=\Sigma_k ({c^*_k}^N)$. Then  ${C^*}^Nt=0$. Notice that ${C^*}^NC^N=v_1+\cdots+v_n=\nu$
 
Consider  now $I\leq R\nu y+Rt \leq R(C^N\nu y)+Rt$, since $\nu y={C^*}^N(C^N\nu y)$. We claim that $I=R(C^N\nu y+t)$.  Clearly  $I\geq R(C^N\nu y+t)$, since $t$ and $\nu y$ and so $t+C^N\nu y$ are in $I$. For the other inclusion note that $C^N(C^*)^N(C^N\nu y+t)=C^N\nu y+ C^N(C^*)^Nt=C^N\nu y$. So $C^N\nu y\in R(C^N\nu y+t)$ and hence  $t\in  R(C^N\nu y+t)$. Thus $RC^N\nu y+Rt=R(C^N\nu y+t)$  and so $I\leq R(C^N\nu y+t)$. 

\smallskip

In summary, we have shown that $I$ is a principal ideal whenever $I$ is a finitely generated ideal which is contained in $J$, and also whenever $I$ is a finitely generated ideal which is not contained in $J$.   So we have completed Case 3 in the induction, and thereby have established the Theorem. 
  \end{proof}


Almost the entirety  of the heavy lifting required to show that $L_K(E)$ is \Bez \ for {\it all} graphs $E$ has been completed by establishing the result for finite graphs, i.e.,  Theorem \ref{LpasareBezout}.

\begin{corollary}\label{LpasBezoutallgraphs}
Let $E$ be an arbitrary graph and $K$ any field.   Then $L_K(E)$ is \Bez.
\end{corollary}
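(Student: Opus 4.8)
The plan is to reduce the general (possibly infinite) graph case to the already-established finite graph case of Theorem \ref{LpasareBezout} by exploiting the fact that Leavitt path algebras are built up as directed limits of the Leavitt path algebras of their finite subgraphs, together with the subring criterion already isolated in Lemma \ref{subringBezout}. The key structural observation is that any finite set of elements $x_1, \dots, x_n \in L_K(E)$ involves only finitely many vertices and edges, and hence is contained in a ``finitely supported'' subalgebra that is isomorphic to (or at least closely related to) the Leavitt path algebra of a finite graph.

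First I would make precise the approximation by finite subgraphs. Each $x_i$ is a finite $K$-linear combination of monomials $\alpha \beta^\ast$ with $\alpha, \beta$ paths in $E$. Collecting all vertices and edges appearing in these finitely many paths, I obtain a finite subgraph $F \subseteq E$ (which I should enlarge slightly so that it is a \emph{complete} finite subgraph — i.e.\ closed enough that the CK relations behave well and $F^0$ is chosen so that every vertex appearing emits, within $F$, either all or none of its edges, or I work with the known directed-limit/complete-subgraph presentation). The standard fact, available from the general theory cited in the excerpt (see \cite{TheBook}), is that $L_K(E)$ is the directed union of the subalgebras associated to its complete finite subgraphs, and each such subalgebra $S$ is a unital subring of $R = L_K(E)$ carrying a copy of $L_K(F)$ for the corresponding finite graph $F$. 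The subtlety here — and the main obstacle — is getting this subalgebra to be genuinely isomorphic to $L_K(F)$ for a \emph{finite} graph $F$ rather than merely a corner; the CK2 relation at a vertex $v$ forces the full sum $\sum_{s(e)=v} ee^\ast$, so one must either restrict to vertices that are sinks in $F$ or invoke the standard complete-subgraph construction that handles this bookkeeping correctly.

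Once this is in place, the argument closes quickly. Given a finitely generated left ideal $I = Rx_1 + \cdots + Rx_n$ of $R = L_K(E)$, I choose a unital subring $S$ containing $\{x_1, \dots, x_n\}$ with $S \cong L_K(F)$ for a finite graph $F$. By Theorem \ref{LpasareBezout}, $L_K(F)$ is \Bez, hence so is $S$, so the left $S$-ideal $Sx_1 + \cdots + Sx_n$ is principal, say equal to $Sx$ for some $x \in S$. Now Lemma \ref{subringBezout} applies verbatim and yields $Rx_1 + \cdots + Rx_n = Rx$, so $I$ is principal. Since $R \cong R^{op}$ for any Leavitt path algebra, the right-sided statement follows by symmetry, and therefore $L_K(E)$ is \Bez. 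The phrase in the excerpt that the result ``will then follow as an easy consequence of a general result about Leavitt path algebras'' signals exactly this directed-limit reduction, with Lemma \ref{subringBezout} doing the conceptual work — the only real labor is verifying the finite-subalgebra approximation, which is where I would focus the care.
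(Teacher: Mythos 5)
Your proposal is correct and follows essentially the same route as the paper: the paper likewise invokes the fact (from \cite[Corollary 1.6.10]{TheBook} or \cite[Theorem 4.1]{RangaRoozbeh}) that $L_K(E)$ is a direct limit of unital subalgebras isomorphic to Leavitt path algebras of finite graphs, applies Theorem \ref{LpasareBezout} to each, and concludes via Lemma \ref{subringBezout}. The ``complete subgraph'' subtlety you flag around the (CK2) relation is exactly what the cited general result takes care of, so no further work is needed.
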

\begin{proof}  By \cite[Corollary 1.6.10]{TheBook} (or by \cite[Theorem 4.1]{RangaRoozbeh}),  $L_K(E)$ is the direct limit of unital subalgebras, each of which is isomorphic to the Leavitt path $K$-algebra of a finite graph.  By Theorem \ref{LpasareBezout}, each of these unital subalgebras is \Bez.   So every finite set of elements of $L_K(E)$ is contained in a unital \Bez \ subring of $R$.  Now apply Lemma \ref{subringBezout}.
\end{proof}

To provide some comparison with the previously mentioned work on principal ideal rings, we offer the following.

\begin{proposition}\label{pirLpas}
Let $E$ be a finite graph.   Then $L_K(E)$ is a principal ideal ring if and only if no cycle in $E$ has an exit.   
\end{proposition}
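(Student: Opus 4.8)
The statement is a biconditional, so I will prove the two implications separately, and the natural strategy is to connect ``$L_K(E)$ is a principal ideal ring'' with the structural dichotomy for Leavitt path algebras provided by the Cuntz--Krieger relations. The plan is to establish that the genuinely ``exotic'' behaviour of $L_K(E)$---the failure of the UGN property, or more concretely the existence of left ideals which decompose as direct sums of infinitely many nonzero submodules---is forced precisely by the presence of a cycle with an exit. Recall from Remark~\ref{Bezoutremark} the observation that $L_K(1,n)$ is \Bez \ but is \emph{not} a principal ideal ring, witnessed by a left ideal which is an infinite direct sum of nonzero ideals. A cycle with an exit is exactly the local configuration that reproduces this pathology inside $L_K(E)$.

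\textbf{The reverse implication (no exit $\Rightarrow$ principal ideal ring).} First I would treat the case where $E$ has no cycles at all, and then the case where every cycle is exit-free. When no cycle has an exit, one may decompose $E^0$ according to the cycle structure: each cycle $c$ with $c^0 = \{v_1,\dots,v_n\}$ has the property that the vertices on $c$ receive no edges outside the cycle (this is essentially the source-cycle condition turned inward), so the CK2 relations at those vertices do not introduce the infinitely-branching behaviour. The key computation is that the corner $\nu L_K(E) \nu$ associated to such a cycle is isomorphic to $M_n(K[x,x^{-1}])$, which is a principal ideal ring by \cite[Theorem 40]{J} (as already invoked in Case~3 of Theorem~\ref{LpasareBezout}). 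For an acyclic graph, $L_K(E)$ is a finite-dimensional (hence Artinian) $K$-algebra, and one can argue it is a finite direct product of matrix rings $M_{n_i}(K)$, each of which is a principal ideal ring. The plan is then to show that the whole algebra is built from these pieces in a way that preserves the principal ideal ring property---either by exhibiting $L_K(E)$ as a finite direct product of matrix algebras over $K$ and over $K[x,x^{-1}]$, or by an induction on $|E^0|$ using the source-reduction machinery of the earlier lemmas (Lemma~\ref{Jasleftideal}, Proposition~\ref{idealsinRomega}), checking at each step that ideals are not merely finitely generated but actually can be infinite in number only when a cycle acquires an exit.

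\textbf{The forward implication (an exit $\Rightarrow$ not a principal ideal ring).} For the contrapositive I would suppose $c = e_1\cdots e_n$ is a cycle possessing an exit, say a vertex $v_i \in c^0$ with $|s^{-1}(v_i)| \geq 2$. The plan is to exhibit explicitly a non-finitely-generated left ideal, which suffices since a principal ideal ring contains no non-finitely-generated one-sided ideals (as noted in the introduction). Using the exit edge $f \in s^{-1}(v_i)$ with $f \notin \{e_1,\dots,e_n\}$, and the base-point idempotents $c^k (c^*)^k$, I would construct an infinite family of pairwise-orthogonal nonzero idempotents of the form $c^k f f^* (c^*)^k$ (for $k \geq 0$), leveraging that $f^* e_i = 0$ and the idempotent identities of Lemma~\ref{lemma:idempotent}; their left ideals $R c^k f f^* (c^*)^k$ then sum to an infinite internal direct sum inside $R v_i$, giving a non-finitely-generated left ideal. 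This mirrors exactly the $L_K(1,n)$ construction referenced in Remark~\ref{Bezoutremark}.

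\textbf{Main obstacle.} The delicate step is the reverse implication, specifically verifying that the principal ideal ring property (not just the weaker \Bez \ property already secured by Theorem~\ref{LpasareBezout}) genuinely propagates through the source-reduction or direct-product decomposition when all cycles are exit-free. The subtlety is that \Bez \ concerns only \emph{finitely generated} ideals, whereas the principal ideal ring property demands that \emph{every} one-sided ideal be finitely generated and principal; so the real content is proving that an exit-free graph forces all one-sided ideals to be finitely generated in the first place. I expect this to reduce to showing that $L_K(E)$ is left Noetherian exactly when no cycle has an exit, after which principality follows from the matrix-algebra identifications together with the \Bez \ property.
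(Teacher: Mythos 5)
Your proposal is correct and follows essentially the same route as the paper: the forward direction via an infinite family of pairwise-orthogonal nonzero idempotents built from the cycle and its exit (the paper uses $\psi_i = c^i(c^*)^i - c^{i+1}(c^*)^{i+1}$ rather than your $c^k ff^*(c^*)^k$, but the argument is the same), and the reverse direction via the decomposition of a no-exit Leavitt path algebra as a finite direct sum of matrix rings over $K$ and over $K[x,x^{-1}]$. The ``main obstacle'' you flag dissolves once you invoke that decomposition (it is \cite[Corollary 2.7.5]{TheBook}), since a finite direct sum of principal ideal rings is a principal ideal ring, with no separate Noetherianity argument needed.
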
  

\begin{proof}
If no cycle in $E$ has an exit, then by \cite[Corollary 2.7.5]{TheBook} $L_K(E)$ is isomorphic as rings to a direct sum of copies of matrices over $K$ and/or matrices over $K[x,x^{-1}]$.   But each of these is well-known to be a principal ideal ring, hence so is their direct sum.

On the other hand, suppose there is a cycle in $E$ which has an exit.  Denote the cycle by $c$, and without loss of generality assume that $c$ is based at $v$, and that the presumed exit $f$ for the cycle has $s(f) = v$.    For each positive integer $i$ consider the element  $ \psi_i := c^i (c^*)^i - c^{i+1} (c^*)^{i+1} \in L_K(E)$.     A straightforward computation yields that each $\psi_i$ is an idempotent, and that $\psi_i \psi_j = 0$ for $i\neq j$.   Moreover, each $\psi_i$ is nonzero, since otherwise $ 0 = \psi_i = \psi_i c^i f = c^if - c^{i+1}c^* f = c^if - 0 = c^i f$, which contradicts the previously mentioned property that $KE$ embeds in $L_K(E)$ injectively.   So the left ideal $\oplus_{i\geq 1} L_K(E) \psi_i$ of $L_K(E)$  is not finitely generated, and therefore cannot   be principal.  
\end{proof}


As one nice consequence of Theorem \ref{LpasareBezout}, we get the following description of the finitely generated projective $L_K(E)$-modules which appear (up to isomorphism) as left ideals of $L_K(E)$.

\begin{corollary}\label{fingenprojwhichareleftideals}
Let $E$  be an arbitrary graph and $K$ any field.  Let $R = L_K(E)$.  Let $P$ be a finitely generated projective left $R$-module.  Then $P$ is isomorphic to a left ideal of $R$ if and only if $P$ is principal.
\end{corollary}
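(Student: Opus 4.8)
The plan is to prove the two implications separately; the forward direction is where the main theorem enters, while the reverse direction is a standard splitting argument that does not even use the \Bez \ property.

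For the reverse implication, suppose $P$ is principal, i.e.\ cyclic. Then there is a surjection $\pi \colon R \to P$ of left $R$-modules sending $1_R$ to a single generator of $P$. Since $P$ is projective, $\pi$ splits, so $R \cong \Ker \pi \oplus S$ with $S \cong P$. As a direct summand of $R$, the module $S$ is a left ideal of $R$, and hence $P \cong S$ is isomorphic to a left ideal. I would emphasize that this is precisely the point where projectivity is essential: a cyclic module is merely a quotient of $R$, but projectivity upgrades ``cyclic quotient of $R$'' to ``direct summand of $R$,'' and the latter is automatically a left ideal.

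For the forward implication, suppose $P \cong I$ for some left ideal $I$ of $R$. Because $P$ is finitely generated and $I \cong P$, the ideal $I$ is a finitely generated left ideal of $R$. By Corollary \ref{LpasBezoutallgraphs}, $R = L_K(E)$ is \Bez, so every finitely generated left ideal of $R$ is principal; in particular $I = Rt$ for some $t \in I$. Therefore $P \cong I = Rt$ is cyclic, i.e.\ principal.

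I do not anticipate a genuine obstacle here: once Corollary \ref{LpasBezoutallgraphs} is in hand, both implications reduce to a single short observation each. If anything, the only step meriting care is the splitting in the reverse direction, where one must invoke projectivity of $P$ to realize it as a direct summand of $R$ rather than merely a cyclic image of $R$; the rest is immediate from the definitions of ``principal'' for ideals and modules.
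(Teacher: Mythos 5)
Your proof is correct and follows essentially the same route as the paper: the forward direction applies Corollary \ref{LpasBezoutallgraphs} to the finitely generated left ideal isomorphic to $P$, and the reverse direction uses projectivity to split the surjection $R \to P$ and realize $P$ as a direct summand of $R$. No gaps; your added emphasis on where projectivity is used is a fair elaboration of the paper's one-line argument.
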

\begin{proof}
If $P$ is isomorphic to a left ideal of $R$ then Corollary~\ref{LpasBezoutallgraphs}  gives the result.   Conversely, if $P$ is principal then there exists $R \rightarrow P \rightarrow 0$, which splits since $P$ is projective, and so $P$ is isomorphic to a direct summand of $R$, and thus isomorphic to a left ideal of $R$. 
\end{proof}

As another consequence of our main result, we re-establish the two-sided version of the \Bez \ property for Leavitt path algebras.

\begin{corollary} \cite[Corollary 8]{Ranga}   Let $E$ be an arbitrary graph and $K$ any field.   Let $I$ be a two-sided ideal of $R = L_K(E)$ which is finitely generated (i.e., there exists $x_1, x_2, \dots, x_n \in R$ for which $I = \sum_{i=1}^n R x_i R$).  Then $I$ is principal (i.e., there exists $x\in R$ with $I = RxR$.)
\end{corollary}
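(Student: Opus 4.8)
The plan is to reduce the two-sided statement to the already-established one-sided (left) \Bez\ property, exploiting the fact that a finitely generated two-sided ideal is, a fortiori, generated by finitely many elements as a \emph{left} module only after we enlarge the generating set. Concretely, suppose $I = \sum_{i=1}^n R x_i R$. The natural first move is to rewrite $I$ as a left ideal: since each $x_i$ lies in $R$, and $R$ is generated (as a $K$-algebra, or for the nonunital case as a ring with local units) by paths and ghost paths, one hopes to find a \emph{finite} left generating set for $I$. The obstacle is that $R x_i R$ is typically \emph{not} finitely generated as a left $R$-module, so $I$ need not be finitely generated on one side; thus I cannot directly invoke Corollary~\ref{LpasBezoutallgraphs}.

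To get around this, I would pass to a unital finite-graph subalgebra via the direct-limit structure used in Corollary~\ref{LpasBezoutallgraphs}. By \cite[Corollary 1.6.10]{TheBook}, $R$ is a direct limit of unital subalgebras $R_\lambda \cong L_K(E_\lambda)$ with $E_\lambda$ finite. Choose $\lambda$ large enough that all the $x_i$ lie in a single such $R_\lambda$, whose identity $1_{R_\lambda}$ is an idempotent $e\in R$. The key point is that within the corner ring $eRe$ (which, for a Leavitt path algebra, is again a Leavitt path algebra of a finite graph, or at least a unital \Bez\ ring by Theorem~\ref{LpasareBezout}), the two-sided ideal generated by finitely many elements \emph{is} two-sided-finitely-generated and the ambient ring is unital, so the two-sided \Bez\ property is tractable there. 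The main step is then to show $I = R\,(e I e)\,R$ and that $eIe$ is principal as a two-sided ideal of the unital \Bez\ ring $eRe$; from a single generator $t$ of $eIe$ one recovers $I = RtR$.

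The cleanest route, however, avoids corners entirely and leans on a structural fact about Leavitt path algebras: every two-sided ideal of $R = L_K(E)$ is \emph{idempotent} and is generated by idempotent elements (indeed by the vertices and certain cycle-based idempotents it contains). Given this, a finitely generated two-sided ideal $I$ has the form $I = RuR$ where $u$ can be taken to be a single idempotent obtained by combining the finitely many idempotent generators using orthogonality and the local-unit structure — precisely the kind of idempotent-merging computation carried out at the end of Case~3 of Theorem~\ref{LpasareBezout}. So I would argue: reduce to finitely many idempotent generators $u_1,\dots,u_n$ with $I = \sum_i R u_i R$; observe that the left ideal $\sum_i R u_i$ is finitely generated, hence principal by Corollary~\ref{LpasBezoutallgraphs}, say equal to $Rx$; and then check that the two-sided ideal $RxR$ coincides with $I$, using that $I$ is generated as a two-sided ideal by the idempotents and that each $u_i \in Rx$ forces $Ru_iR \subseteq RxR \subseteq I$.

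The hard part will be verifying the reverse containment $I \subseteq RxR$, i.e., that making $x$ a single \emph{left} generator of $\sum_i R u_i$ is strong enough to also generate $I$ as a \emph{two}-sided ideal. This requires knowing that the two-sided ideal is controlled by its left-generating data, which is exactly where the special structure of Leavitt path algebras — that graded ideals are determined by hereditary saturated vertex sets and that every ideal contains enough idempotents — must be invoked, citing the standard ideal-structure theory from \cite{TheBook}. Once that containment is secured, the result follows immediately, and it recovers \cite[Corollary 8]{Ranga}.
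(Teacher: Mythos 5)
There is a genuine gap, and it is the step you yourself flag as ``the hard part'': verifying $I \subseteq RxR$. But that step is not hard --- it is a one-line formal computation valid in any ring, and because you leave it unproved (and propose to attack it with ideal-structure theory that is not actually needed), the proposal as written is incomplete. The paper's entire proof is: set $T = \sum_{i=1}^n R x_i$, a finitely generated \emph{left} ideal; then $I = \sum_{i=1}^n R x_i R = \bigl(\sum_{i=1}^n R x_i\bigr) R = TR$; by Corollary~\ref{LpasBezoutallgraphs} write $T = Rx$; then $I = TR = RxR$. Your opening worry --- that $I$ need not be finitely generated as a left ideal, so Corollary~\ref{LpasBezoutallgraphs} cannot be invoked --- is a red herring: one applies the one-sided \Bez\ property to $T$, not to $I$. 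Likewise the reverse containment you worry about is immediate: each $x_i \in T = Rx$ gives $Rx_iR \subseteq RxR$, hence $I \subseteq RxR$; and $x \in T \subseteq I$ with $I$ two-sided gives $RxR \subseteq I$. No graded-ideal theory, no hereditary saturated sets, no idempotents.

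Two further problems with the detours you propose. First, the reduction to finitely many \emph{idempotent} generators rests on the claim that every two-sided ideal of $L_K(E)$ is generated by idempotents; this is true for graded ideals but false for general ideals (non-graded ideals require generators of the form $v + \sum_i k_i c^i$ for cycles $c$ without exits, which are not idempotent), so that step would itself be a gap --- fortunately it is unnecessary. Second, the corner/direct-limit route adds nothing: the direct-limit argument is already absorbed into Corollary~\ref{LpasBezoutallgraphs}, which holds for arbitrary graphs, so you may cite it directly for the left ideal $T$ without descending to a finite subgraph.
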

\begin{proof}   Consider the (finitely generated) left ideal $T = \sum_{i=1}^n R x_i $ of $R$.  Then $I = TR$ as two-sided ideals.    By Corollary \ref{LpasBezoutallgraphs} there exists $x\in R$ with $T = Rx$.   Then $I = TR = RxR$, as desired. 
\end{proof}

\begin{remark}\label{Moritaequivremark}
{\rm In \cite{AR} it is shown that if $v$ is a non-isolated source vertex, then $L_K(E)$ is Morita equivalent to $L_K(E\setminus \{v\})$.   Although at first glance one might think that this information could be  useful in  establishing Case 2 in the proof of Theorem \ref{LpasareBezout},  it  indeed is not helpful in this context,  because the \Bez \ property is in general not a Morita invariant.  (See e.g. \cite[page 536]{W} and/or \cite[page 625]{Rob}.    We thank L. Small for pointing out these references.)  \hfill $\Box$ 
}

\end{remark}

We close the article with a remark about the usefulness of the main result.   
{\rm In \cite{AMT} we use the now-established \Bez \ property of Leavitt path algebras to investigate the injectivity of a naturally-occuring class of modules over $L_K(E)$,  by means of a divisibility notion which arises in this setting.    The key tool is the fact that, in a \Bez \ ring, the Baer test on finitely generated ideals reduces to a divisibility property.}


%
%

\medskip



\begin{thebibliography}{}

\bibitem{Survey}  G. Abrams, {\it Leavitt path algebras: the first decade}, Bull. Math. Sci {\bf 5}(1) (2015), 59 - 120. 

\bibitem{AALP}  G. Abrams, P.N. \'{A}nh, A. Louly, and E. Pardo, \textit{ The classification question for Leavitt path algebras}, J. Algebra \textbf{320} (2008), 1983 - 2026.  

\bibitem{TheBook}  G. Abrams, P. Ara, and M. Siles Molina.  Leavitt path algebras.   Springer Lecture Notes in Mathematics series.  to appear.   

\bibitem{AAP1}    G. Abrams and G. Aranda Pino, \textit{The Leavitt path algebra of a graph},  J. Algebra \textbf{293}(2) (2005), 319 - 334. 


\bibitem{AMT}  G. Abrams, F. Mantese, and A. Tonolo, {\it Pr\"{u}fer-like modules over Leavitt path algebras}, in preparation.   

\bibitem{ANP}  G. Abrams, T.G. Nam, and N.T. Phuc, {\it Leavitt path algebras having Unbounded Generating Number}, submitted.     arXiv:1603.09695

\bibitem{Amitsur}   S. Amitsur, {\it Remarks on principal ideal rings}, Osaka Math J. {\bf 15} (1963), 59 - 69.  

\bibitem{AMP}   P. Ara, M.A. Moreno, and E. Pardo,  \textit{Non-stable $K$-theory for graph algebras}, Algebr. Represent. Theor. \textbf{10}(2) (2007),  157 - 178.





\bibitem{AR}  P. Ara and K.M. Rangaswamy, {\it Finitely presented simple modules over Leavitt path algebras},  J. Algebra \textbf{417}  (2014), 333 - 352.

\bibitem{Asano}   K. Asano.  Nichtkommutative Hauptidealringe.   Paris:  Hermann, 1938.  

\bibitem{B} R.~A.~Beauregard and R.~E.~Johnson, {\it Primary factorization in a weak \Bez \ domain}, Proc. Amer. Math. Soc.  {\bf 25} (1970), 662 - 665.  

\bibitem{Cohn} P. M.~Cohn, {\it Right principal \Bez \ domains}, J. London Math. Soc {\bf 35}(2) (1987), 251 - 262. 

\bibitem{G}  L. Gerritzen, {\it Modules over the algebra of the noncommutative equation $yx=1$}, Archiv Math. {\bf 75} (2000), 98 - 112.

\bibitem{Goldie}   A.W. Goldie, {\it Noncommutative principal ideal rings}, Archiv Math. {\bf 13}(1) (1962), 213 - 221. 

\bibitem{RangaRoozbeh}  R. Hazrat and K.M. Rangaswamy, {\it On graded irreducible representations of Leavitt path algebras}, J. Algebra {\bf 450} (2016), 458-486.

\bibitem{Kap} I. Kaplansky, {\it Elementary divisors and modules}, Trans. Amer. Math. Soc. {\bf 66} (1949), 464 - 491.
   
\bibitem{J}  N. Jacobson, Theory of Rings.  Mathematical Surveys and Monographs {\bf 2}.  American Mathematical Society, 1943.

\bibitem{Ranga}  K. M. Rangaswamy, {\it On generators of two-sided ideals of Leavitt path algebras over arbitrary graphs}, Comm. Alg. {\bf 42} (2014), 2859-2868.


\bibitem{Rob}  J.C. Robson, {\it Rings in which finitely generated right ideals are principal}, Proc. London Math. Soc \textbf{17}(3) (1967), 617 - 628.  

\bibitem{W}   R.B. Warfield Jr., {\it \Bez \  rings and serial rings},  Comm. Algebra \textbf{7}(5) (1979), 533 - 545.

\end{thebibliography}
\end{document}